\tikzstyle{arrow} = [thick,->,>=stealth]
\newtheorem{theorem}{Theorem}[section]
\newtheorem{proposition}[theorem]{Proposition}
\newtheorem{lemma}[theorem]{Lemma}
\newtheorem{corollary}[theorem]{Corollary}
\numberwithin{equation}{section}
\title{Overlapping Discs Filtration in the Commutative Operad}
\author{Keely Grossnickle}
\address{Department of Mathematics\\
	Kansas State University\\
	%138 Cardwell Hall\\
	Manhatan, KS 66506, USA}
\email{kgrossni@ksu.edu}
\date{}
\begin{document}

	\begin{abstract}
		The spaces of configurations of non-$k$-overlapping discs have been studied as a bimodule over the little discs operad. In fact, the spaces form a filtered operad. We define and study the induced structure on the homology.
	\end{abstract}
	\maketitle
	\sloppy
		\section{Introduction}
	Let $\mathcal{M}_{d}(n)$ be the configuration space of $n$ distinct, labeled points in $\mathbb{R}^{d}$. We can impose a $non-k-equal$ condition such that no $k$ coincide, denote this space by $\mathcal{M}_{d}^{(k)}(n)$. Let $\mathcal{B}_{d}$ denote the little $d$-discs operad, where $\mathcal{B}_{d}(n)$ is the configuration space of $n$ open, labeled, disjoint discs inside the unit disc. Equivalently, $\mathcal{B}_{d}(n)=sEmb\left(\bigsqcup\limits_{n} D^{d}, D^{d} \right)$, the space of special embeddings, $\bigsqcup\limits_{n}D^{d}\hookrightarrow D^{d}$ given on each component by translation and rescaling only. We can impose a $non-k-overlapping$ condition on a configuration of discs where no $k$ discs share a common point. Denote this new space by $\mathcal{B}_{d}^{(k)}(n)$. One can view $\mathcal{B}_{d}^{(k)}(n)$ as a space $sImm^{(k)}\left(\bigsqcup\limits_{n} D^{d},D^{d}\right)$, the space of non-$k$-overlapping special immersions, $\bigsqcup\limits_{n}D^{d} \looparrowright D^{d}$ given on each component by translation and rescaling only. There is a homotopy equivalence $\mathcal{B}_{d}^{(k)}(n)\rightarrow \mathcal{M}_{d}^{(k)}(n)$ by taking the center of each disc. In this paper we will mainly use $\mathcal{B}_{d}^{(k)}$ since it gives us more structure, however $\mathcal{M}_{d}^{(k)}$ is more useful when proving Theorem \ref{maintheorem} of the paper. 
	
	More general configuration spaces have been studied in \cite{kosar2016cohomology}, which produced similar relations to the ones found in this paper.
	
	The sequence $\mathcal{B}_{d}^{(k)}$ is a bimodule over $\mathcal{B}_{d}$. The right action 
	\begin{equation} \label{bimodoverlittlediscrightaction}
	\mathcal{B}_{d}^{(k)}(n)\times \mathcal{B}_{d}(m_{1})\times \mathcal{B}_{d}(m_{2})\times...\times\mathcal{B}_{d}(m_{n})\rightarrow \mathcal{B}_{d}^{(k)}(m_{1}+m_{2}+...+m_{n})
	\end{equation}
	and the left action
	\begin{equation}\label{bimodoverlittlediscleftaction}
	\mathcal{B}_{d}(n)\times \mathcal{B}_{d}^{(k)}(m_{1})\times \mathcal{B}_{d}^{(k)}(m_{2})\times...\times\mathcal{B}_{d}^{(k)}(m_{n})\rightarrow \mathcal{B}_{d}^{(k)}(m_{1}+m_{2}+...+m_{n})
	\end{equation}
	are given by composition of disc maps. The composition maps are well-defined as the resulting configurations satisfy the non-$k$-overlapping condition.
	
	Recall that a pointed bimodule under $H_{\ast} \mathcal{B}_{d}$ is a bimodule over $H_{\ast} \mathcal{B}_{d}$ containing $\mathcal{C}om$, the commutative operad. That is, one has a commutative diagram of $H_{\ast}\mathcal{B}_{d}$-bimodules.
	\begin{center}
		\begin{tikzpicture}
		[nodes={fill=black!0}]
		\node (topleft) [ ] {$H_{\ast} \mathcal{B}_{d}$};
		\node (topright) [right of=topleft, xshift=1cm] {$H_{\ast} \mathcal{B}_{d}^{(k)}$};
		\node (bottom) [below of=topleft, xshift=1cm] {$\mathcal{C}om$};
		\draw [->] (topleft) -- (topright);
		\draw [->] (topleft) -- (bottom);
		\draw [right hook->] (bottom) -- (topright);
		\end{tikzpicture}
	\end{center}
	
	Also recall that $H_{\ast}\mathcal{B}_{d}$ is the associative unital operad for $d=1$ and the graded Poisson unital operad, with bracket of degree $d-1$ for $d\geq 2$, see \cite{cohen}.
		
	In \cite{D_T}, the authors describe $H_{\ast}\mathcal{B}_{d}^{(k)}$ as a pointed bimodule under $H_{\ast}\mathcal{B}_{d}$ in Theorem 3.6 of their paper.
	
	\begin{theorem} [N. Dobrinskaya and V. Turchin, \cite{D_T}] \label{dttheorem}
		For $k \geq 3$, the pointed bimodule  $H_{\ast}\mathcal{B}_{d}^{(k)}$ under $H_{\ast}\mathcal{B}_{d}$ is generated by one element: $\{x_{1},...,x_{k}\}\in H_{(k-1)d-1}\mathcal{B}_{d}^{(k)}(k)$ satisfying the following properties:
		symmetric or skew symmetric depending on the parity of $d$:
		\begin{equation} \label{thmsymmrelation}
		\{x_{\sigma_{1}}...x_{\sigma_{k}}\}=(-1)^{|\sigma|d}\{x_{1},...,x_{k}\},\ \sigma \in \Sigma_{k}.
		\end{equation}
		The only relation on the left action is the generalized Jacobi: 
		\begin{equation} \label{thmjacobirelation}
		\sum_{i=1}^{k+1}(-1)^{(i-1)d}[x_{i},\{x_{1},...,\hat{x}_{i},...,x_{k+1}\}]=0,
		\end{equation}	
		 and lastly two Leibniz relations with respect to the right action:
		\begin{equation} \label{thmleibniz1}
		\{x_{1},...,x_{k-1}, x_{k}\cdot x_{k+1}\} = x_{k}\cdot \{x_{1},...,x_{k-1},x_{k+1}\} + \{x_{1},...,x_{k}\}\cdot x_{k+1};
		\end{equation}
		\begin{equation} \label{thmleibniz2}
		\{x_{1},...,x_{k-1}, [x_{k},x_{k+1}]\} = (-1)^{d}[\{x_{1},...,x_{k-1},x_{k+1}\},x_{k}]+[\{x_{1},...,x_{k}\}, x_{k+1}].
		\end{equation}
	\end{theorem}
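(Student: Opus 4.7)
The plan is to first construct the generator $\{x_1,\ldots,x_k\}$ geometrically, verify that the four listed families of relations hold, and finally show that these exhaust all relations by matching against the known homology of $k$-equal arrangements. Throughout I would work with the model $\mathcal{M}_d^{(k)}(n)$ via the homotopy equivalence mentioned in the introduction, since it makes the arrangement-theoretic input cleaner.

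For the generator, I would use the following construction. Consider the affine subspace $W = \{(y_1,\ldots,y_k)\in(\mathbb{R}^d)^k : \sum y_i = 0\}$, identified with $\mathbb{R}^d\otimes V_{k-1}$ where $V_{k-1}$ is the standard $(k-1)$-dimensional $\Sigma_k$-representation. A unit sphere in $W$, rescaled to fit inside $D^d$ and translated to a common center, traces out a copy of $S^{(k-1)d-1}$ inside $\mathcal{M}_d^{(k)}(k)$; its fundamental class is the generator. This immediately accounts for the degree $(k-1)d-1$. The symmetry relation \eqref{thmsymmrelation} then follows because a transposition in $\Sigma_k$ acts on $V_{k-1}$ with determinant $-1$, hence on $W = \mathbb{R}^d \otimes V_{k-1}$ with determinant $(-1)^d$, so it acts on the orientation class of $S^{(k-1)d-1}$ by the sign $(-1)^{|\sigma|d}$.

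Next I would verify the Jacobi-type relation \eqref{thmjacobirelation} and the Leibniz relations \eqref{thmleibniz1}--\eqref{thmleibniz2}. For Jacobi I would exhibit an explicit $(k-1)d$-chain in $\mathcal{M}_d^{(k)}(k+1)$ whose boundary is the signed sum: starting from a configuration where $k$ points sit at a small $k$-equal cluster and the $(k+1)$st point is at infinity, one sweeps the extra point all the way around the cluster; the $k+1$ strata where the roles are exchanged contribute the signed Lie brackets, and the signs $(-1)^{(i-1)d}$ track the orientation of coordinate permutations. The Leibniz relations reflect how the right $\mathcal{B}_d$-action interacts with the collision sphere: when the $k$th entry is replaced by an infinitesimal pair, the pair either collides before contributing to the $k$-collision or sits alongside it, and these two regimes produce the two terms on each right-hand side. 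Each of these can be made rigorous by drawing an explicit bounding chain inside the corresponding $\mathcal{M}_d^{(k)}(n)$.

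The main difficulty is to show that no further relations are needed. My strategy would be a Poincaré-series comparison. On the algebraic side, the free pointed bimodule under $H_*\mathcal{B}_d$ on one generator in degree $(k-1)d-1$ modulo the listed relations can be given a normal form in terms of forests whose internal nodes are either binary Lie brackets or commutative products, with exactly one distinguished $k$-ary vertex per connected component; rewriting \eqref{thmsymmrelation}--\eqref{thmleibniz2} as a confluent reduction system should produce an explicit basis indexed by such forests. On the topological side, the Poincaré series of $H_*\mathcal{B}_d^{(k)}(n)$ is known from the work on the $k$-equal arrangement (Björner--Welker, Sundaram--Welker), computed via the order complex of the $k$-equal partition lattice. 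The hard part, which I expect to be the main obstacle, is matching these two enumerations term-by-term: one must show that the combinatorial basis on the bimodule side agrees with the arrangement-theoretic count in every bi-arity $(n,*)$, and in particular that no unexpected linear dependencies force additional relations.
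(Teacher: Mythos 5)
First, a point of bookkeeping: the paper does not prove this statement. It is quoted from Dobrinskaya--Turchin \cite{D_T} (Theorem 3.6 of that paper), so there is no internal proof to compare against; the relevant comparison is with the argument in the cited source, whose key ingredients (the admissible $k$-forest cocycles and the intersection pairing $\Psi$) the present paper recalls in Section 5. Your outline does follow the broad strategy of \cite{D_T}: the generator is the fundamental class of the sphere \eqref{sphererepresentation}, the symmetry relation \eqref{thmsymmrelation} comes from the determinant $(\operatorname{sgn}\sigma)^d$ of the $\Sigma_k$-action on $\mathbb{R}^d\otimes V_{k-1}$, and the relations \eqref{thmjacobirelation}--\eqref{thmleibniz2} are produced by explicit bounding chains. (For Jacobi the relevant chain is the full sphere $\sum x_i=0$, $\sum|x_i|^2=1$ in $\mathcal{M}_d^{(k)}(k+1)$ with tubular neighborhoods of the $k+1$ forbidden subspheres removed --- essentially the picture used in the proof of Theorem \ref{maintheorem} --- rather than a sweep of a single point around a cluster, but the mechanism is the same, and the Leibniz chains are exactly the cylinders drawn in the proof of Lemma \ref{newleibnizeqs}.)

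The genuine gap is in the completeness step. A Poincar\'e-series match between the abstract quotient bimodule and $H_*\mathcal{B}_d^{(k)}(n)$ does not by itself finish the argument: a well-defined map between graded vector spaces of equal finite dimensions need not be an isomorphism. You need either surjectivity --- that is, the generation claim, which your proposal never addresses: why do the products of iterated brackets obtained from the two generators span all of $H_*\mathcal{B}_d^{(k)}(n)$? --- or injectivity, and your confluent-rewriting normal form only concerns the algebraic side. Dobrinskaya--Turchin close this loop by pairing the proposed spanning set of products of iterated brackets, via the intersection map $\Psi$ of \eqref{psi}, against the basis of $H^*\mathcal{M}_d^{(k)}(n)$ given by admissible $k$-forests (whose enumeration is where the Bj\"orner--Welker input on $k$-equal arrangements actually enters); the pairing is upper-triangular and non-degenerate, which yields linear independence and spanning simultaneously and makes the separate term-by-term enumeration match unnecessary. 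Without some such duality, or an independent proof of surjectivity, the plan as stated does not terminate at the place you yourself flag as the main obstacle.
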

		
	As a bimodule $H_{\ast}\mathcal{B}_{d}^{(k)}(k)$ has two generators, $\{x_{1},...,x_{k}\}\in H_{(k-1)d-1}$ and the generator $x_{1}\in H_{0}\mathcal{B}_{d}^{(k)}(1)$ of $\mathcal{C}$om $=H_{0}\mathcal{B}_{d}^{(k)}$. Below we give a geometrical description of $\{x_{1},...,x_{k}\}$.  We know $$\mathcal{B}_{d}^{(k)}(k)\simeq \{(x_{1},...,x_{k}) \in (\mathbb{R}^{d})^{k}|\ \mathrm{exclude} \  x_{1}=x_{2}=...=x_{k}\} = \mathbb{R}^{dk} - \mathbb{R}^{d} \simeq S^{(k-1)d-1}.$$
	This implies that the element $\{x_{1},...,x_{k}\} \in H_{(k-1)d-1}\mathcal{B}_{d}^{(k)}(k)$ can geometrically be realized as a $[(k-1)d-1]$-sphere:
	\begin{equation}\label{sphererepresentation}
	|x_{1}|^{2}+|x_{2}|^{2}+...+|x_{k}|^{2} = \varepsilon^{2} \qquad \qquad \sum_{i=1}^{k} x_{i} = 0
	\end{equation}
	\noindent Here, $x_{i}$ stands for the center of the $i$-th ball, each of radius $<<\varepsilon$.
	
	Elements in $H_{\ast}\mathcal{B}_{d}^{(k)}(n), d\geq 2$ can be thought as products of iterated brackets. Formally, one can have a bracket, $[\cdot,\cdot]$, or a multiplication inside of a brace, however by the two Leibniz relations \eqref{thmleibniz1} and \eqref{thmleibniz2}, they can be pulled outside of the brace. This means the left action on the two generators span all of the homology and nothing new comes from the right action. All braces must be of the same length and braces cannot be inside of other braces. Singletons are allowed in the products of brackets. One must have at least one brace inside a bracket, that is, $[[x_{1},x_{3}],x_{2}]$ is zero as an element. It is possible to have two or more braces inside a bracket, for example $[\{x_{1},x_{3},x_{5}\},\{x_{2},x_{4},x_{6}\}]$.
		
	\textbf{Example:} Let us look at an example of an element in $H_{\ast}\mathcal{B}_{d}^{(k)}(n)$ for $k=3$, $n=5$. Let us examine the element $x_{2}\cdot [\{x_{1}, x_{3}, x_{4}\}, x_{5}] \in H_{3d-2}\mathcal{B}_{d}^{(3)}(5)$. This element can be geometrically represented as the following product of spheres: $S^{(k-1)d-1}\times S^{d-1} = S^{2d-1}\times S^{d-1}$, since $k=3$. Here $x_{1}, x_{3}, x_{4}$ orbit closely around each other making a spherical class as seen above in \eqref{sphererepresentation}. The disc $x_{5}$ orbits around $x_{1},x_{3},x_{4}$. Lastly, $x_{2}$ does not interact with the other discs and stays still far away from the other points. 
	
	\begin{tikzpicture}
	\node (X2) [draw=none, fill=none] { };
	\node (X1) [draw=none, fill=none, right of=X2, xshift=4cm] { };
	\node (X3) [draw=none, fill=none, right of=X1] { };
	\node (X4) [draw=none, fill=none, right of=X3] { };
	\node (X5) [draw=none, fill=none, right of=X4, xshift=1.5cm] { };
	
	\node (number2) [draw=none, fill=none, below of=X2, yshift=0.3cm] {2};
	\node (number1) [draw=none, fill=none, below of=X1, yshift=0.3cm] {1};
	\node (number3) [draw=none, fill=none, below of=X3, yshift=0.3cm] {3};
	\node (number4) [draw=none, fill=none, below of=X4, yshift=0.3cm] {4};
	\node (number5) [draw=none, fill=none, below of=X5, yshift=0.3cm] {5};
	
	\node (dot2) [draw=none, fill=none, below of=X2] {$\bullet$};
	\node (dot1) [draw=none, fill=none, below of=X1] {$\bullet$};
	\node (dot3) [draw=none, fill=none, below of=X3] {$\bullet$};
	\node (dot4) [draw=none, fill=none, below of=X4] {$\bullet$};
	\node (dot5) [draw=none, fill=none, below of=X5] {$\bullet$};
	
	\draw (5,-1) to [out=80, in=100] (6,-1);
	\draw (5,-1) to [out=-80, in=-100] (6,-1);
	\draw (6,-1) to [out=80, in=100] (7,-1);
	\draw (6,-1) to [out=-80, in=-100] (7,-1);
	\draw [dashed] (6,-1) ellipse (3.5cm and 0.72cm);
	\end{tikzpicture}
	
	\vspace{0.2cm}
	
	Theorem \ref{dttheorem} still holds when $d=1$. However the bracket $[x_{1},x_{2}]$ should be understood as $x_{1}x_{2}-x_{2}x_{1}$ so the only operation is multiplication, as the underlying operad $H_{\ast}\mathcal{B}_{1}$ is $\mathcal{A}ssoc$, the associative operad. In this case \eqref{thmleibniz1} implies \eqref{thmleibniz2}. The relation \eqref{thmjacobirelation} is instead equivalently written as $$\sum_{i=1}^{k+1}(-1)^{i-1} \left( x_{i}\cdot \{x_{1},...,\hat{x}_{i},...,x_{k+1}\}-\{x_{1},...,\hat{x}_{i},...,x_{k+1}\}\cdot x_{i}\right).$$ 
	
	In this paper we introduce and study the filtered operad of overlapping discs, denoted $\mathcal{B}_{d}^{(\infty)}$. This new operad will contain information on both the operad $\mathcal{B}_{d}$ and all the bimodules $\mathcal{B}_{d}^{(k)}$. Its filtration is natural by the degree of the overlap. The main results are Theorem \ref{maintheorem}, Lemma \ref{lemmaoverlappingdiscinclusionnullhom}, Corollary \ref{corollary}, Proposition \ref{lemmacompositionstrivial}, and Lemma \ref{newleibnizeqs} which describe the induced structure in the homology that comes from the structure of a filtered operad on $\mathcal{B}_{d}^{(\infty)}$
	
	\subsection{Motivation}
	
	One application of the computations that we have in mind is for the study of spaces of immersions. Let $Imm_{\partial}^{(k)}(\mathbb{D}^{m},\mathbb{D}^{n})$ be the space of immersions of $\mathbb{D}^{m}\hookrightarrow\mathbb{D}^{n}$ of discs that are the standard inclusion in a neighborhood of the boundary and satisfy the condition that the image of any $k$-element subset has more than one point. Such spaces are called non-$k$-equal immersions. The bimodules of non-$k$-overlapping discs naturally appear in the study of these spaces \cite[Section 11]{D_T}.
	
	One has a natural sequence of inclusions:
	\begin{equation} \label{filteredimmersions}
	\overline{Emb}_{\partial} (\mathbb{D}^{m},\mathbb{D}^{n}) = \overline{Imm}_{\partial} {}^{(2)}(\mathbb{D}^{m},\mathbb{D}^{n}) \subset \overline{Imm}_{\partial}{}^{(3)}(\mathbb{D}^{m},\mathbb{D}^{n}) \subset \cdots \subset\overline{Imm}_{\partial}{}^{(\infty)}(\mathbb{D}^{m},\mathbb{D}^{n}), 
	\end{equation}
	where $$\overline{Imm}_{\partial}{}^{(k)}(\mathbb{D}^{m},\mathbb{D}^{n})= hofib\left(Imm_{\partial}^{(k)}(\mathbb{D}^{m},\mathbb{D}^{n})\rightarrow Imm_{\partial}(\mathbb{D}^{m}, \mathbb{D}^{n})\right).$$
	Note that $\overline{Imm}_{\partial}{}^{(\infty)}(\mathbb{D}^{m},\mathbb{D}^{n})= \overline{Imm}_{\partial}(\mathbb{D}^{m},\mathbb{D}^{n}) \simeq \ast.$
	Each space $\overline{Imm}_{\partial}{}^{(k)}(\mathbb{D}^{m},\mathbb{D}^{n})$ is naturally a $\mathcal{B}_{m}$-algebra. Moreover $\overline{Imm}_{\partial}{}^{(2)}(\mathbb{D}^{m},\mathbb{D}^{n})=\overline{Emb}_{\partial} (\mathbb{D}^{m},\mathbb{D}^{n})$ is a $\mathcal{B}_{m+1}$-algebra \cite{budneycubes,budneysplice, sakai}. (One dimension higher little discs action comes from the fact that embeddings can be pulled through each other.) Exactly the same construction endows \eqref{filteredimmersions} with a structure of a $filtered$ $\mathcal{B}_{m+1}-algebra$. This in particular means that one has a Browder bracket:
	\begin{equation}\label{browderbracket}
	[\cdot,\cdot]: H_{i}\overline{Imm}_{\partial}{}^{(k_{1})}(\mathbb{D}^{m},\mathbb{D}^{n})\times H_{j}\overline{Imm}_{\partial}{}^{(k_{2})}(\mathbb{D}^{m},\mathbb{D}^{n}) \rightarrow H_{i+j+m}\overline{Imm}_{\partial}{}^{(k_{1}+k_{2}-2)}(\mathbb{D}^{m},\mathbb{D}^{n}).
	\end{equation}
	
	The Goodwillie-Weiss tower associated to the space $\overline{Imm}_{\partial}{}^{(k)}(\mathbb{D}^{m},\mathbb{D}^{n})$ is expressed in terms of an (infinitesimal) $\mathcal{B}_{m}$-bimodule $\mathcal{B}_{n}^{(k)}$, \cite[Section 11]{D_T}. This tower is conjectured to converge to $\overline{Imm}_{\partial}{}^{(k)}(\mathbb{D}^{m},\mathbb{D}^{n})$, provided $n-m\geq 2$. We believe that the structure of a filtered operad on $\mathcal{B}_{n}^{(\bullet)}$ and the computations made in this paper can be used to understand the structure of a filtered $\mathcal{B}_{m+1}$-algebra \eqref{filteredimmersions} and in particular the Browder bracket operator \eqref{browderbracket}.
	
	\subsection{Acknowledgments}
	
	The author is thankful to her advisor, V. Turchin, for his time and guidance, particularly in determining the signs for Theorem \ref{maintheorem}.
	
	\section{Filtered Operads of Overlapping Discs} \label{sectionfilteredoperad}
	
	An operad $\mathcal{O}$ is filtered if there is a filtration in each component of $F_{0}\mathcal{O}(n)\subset F_{1}\mathcal{O}(n)\subset F_{2}\mathcal{O}(n)\subset...$ compatible with the composition maps:
	\begin{equation} \label{filteredoperadcomp}
	\circ_{i}: F_{k_{1}}\mathcal{O}(n_{1}) \times F_{k_{2}}\mathcal{O}(n_{2}) \rightarrow F_{k_{1}+k_{2}}\mathcal{O}(n_{1}+n_{2}-1).
	\end{equation}
	
	We assume that $id\in F_{0}\mathcal{O}(1)$. Note that $F_{0}\mathcal{O}$ is a suboperad of $\mathcal{O}$.
	
	The filtration of the operad induces a sequence of maps in homology:
	\begin{equation} \label{inducedsequencehom}
	H_{\ast}F_{0}\mathcal{O}(n)\rightarrow H_{\ast}F_{1}\mathcal{O}(n)\rightarrow H_{\ast}F_{2}\mathcal{O}(n)\rightarrow...
	\end{equation}
	and the composition maps:
	\begin{equation} \label{filteredoperadhomcomp}
	\circ_{i}: H_{\ast}F_{k_{1}}\mathcal{O}(n_{1}) \otimes H_{\ast}F_{k_{2}}\mathcal{O}(n_{2}) \rightarrow H_{\ast}F_{k_{1}+k_{2}}\mathcal{O}(n_{1}+n_{2}-1).
	\end{equation}
	
	One has the inclusion $\mathcal{B}_{d}^{(k)}(n) \subset \mathcal{B}_{d}^{(k+1)}(n)$ since the non-$k$-overlapping condition is stricter than the non-$(k+1)$-overlapping condition.
	Now define $F_{i}\mathcal{O}(n) := \mathcal{B}_{d}^{(i+2)}(n)$, where $\mathcal{O}(n) := 
	\mathcal{B}_{d}^{(\infty)}(n):=\bigcup\limits_{i=2}\limits^{\infty}\mathcal{B}_{d}^{(i)}(n)$. Just as with $\mathcal{B}_{d}$, here composition \eqref{filteredoperadcomp} is inserting a configuration of $n_{2}$ discs from $\mathcal{B}_{d}^{(k_{2})}(n_{2})$ into the $i$-th disc of $\mathcal{B}_{d}^{(k_{1})}(n_{1})$. When $k_{1}=k_{2}=2$ we get the usual operadic composition in $\mathcal{B}_{d}$. Note that $\mathcal{B}_{d}^{(2)}$ is the usual little discs operad $\mathcal{B}_{d}$. When $k_{1} = 2$ we get the infinitesimal version of the left action \eqref{bimodoverlittlediscleftaction} and when $k_{2} = 2$ we get the infinitesimal version of the right action \eqref{bimodoverlittlediscrightaction}. Note that $\mathcal{B}_{d}^{(\infty)}(n)$ = $\left( \mathcal{B}_{d}(1)\right)^{n}$ and therefore is contractible. Thus $\mathcal{B}_{d}^{(\infty)}$ is equivalent to the commutative operad.
	
	From \cite{D_T}, we already know the homology groups of $H_{\ast}\mathcal{B}_{d}^{(k)}(n)$, $k\geq 2$, and how the composition maps work when either $k_{1}$ or $ k_{2}$ is 2. Now we want to understand the maps in the sequence \eqref{inducedsequencehom} as well as the composition maps from \eqref{filteredoperadhomcomp}, for $k_{1},k_{2}>2$. To understand the sequence \eqref{inducedsequencehom} we will need the following lemma, which will also be useful when understanding the composition maps \eqref{filteredoperadhomcomp}.
\begin{lemma}\label{lemmaoverlappingdiscinclusionnullhom}
	For all $d\geq 1$, $k\geq 2$ and $n\geq 0$, the inclusion $\mathcal{B}_{d}^{(k)}(n) \subset \mathcal{B}_{d}^{(k+1)}(n)$ is null-homotopic.
\end{lemma}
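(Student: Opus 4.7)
The plan is to reduce via the homotopy equivalence $\mathcal{B}_d^{(k)}(n)\simeq\mathcal{M}_d^{(k)}(n)$ (the center-of-disc map) to showing that the inclusion $\mathcal{M}_d^{(k)}(n)\hookrightarrow\mathcal{M}_d^{(k+1)}(n)$ is null-homotopic, and to argue by induction on $n$. The base case $n\le k$ is immediate: on at most $k$ points the non-$(k+1)$-equal condition is vacuous, so $\mathcal{M}_d^{(k+1)}(n)=(\mathbb{R}^d)^n$ is contractible and any map into it is null-homotopic.

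For the inductive step $n>k$, the first move is to homotope the inclusion to a simpler map via the straight-line ``merge'' homotopy
\[
H(\vec x,t)=\bigl((1-t)x_1+tx_2,\,x_2,\,x_3,\ldots,\,x_n\bigr),\qquad t\in[0,1].
\]
A short case analysis---splitting by whether a hypothetical $(k+1)$-fold coincidence at some intermediate time $t$ involves index $1$ or not---shows $H$ stays in $\mathcal{M}_d^{(k+1)}(n)$. If such a coincidence excludes index $1$, one gets a $(k+1)$-fold coincidence among the untouched $x_i$'s, contradicting $\vec x\in\mathcal{M}_d^{(k)}(n)$. If it includes index $1$, then the remaining $k$ coinciding indices $i_2,\ldots,i_{k+1}\ge2$ give the $k$-fold coincidence $x_{i_2}=\cdots=x_{i_{k+1}}$ in the original, again contradicting $\mathcal{M}_d^{(k)}(n)$. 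Thus the inclusion is homotopic to the merge map $g(\vec x)=(x_2,x_2,x_3,\ldots,x_n)$.

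The merged map $g$ factors as $\mathcal{M}_d^{(k)}(n)\xrightarrow{\pi}\mathcal{M}_d^{(k)}(n-1)\xrightarrow{\delta}\mathcal{M}_d^{(k+1)}(n)$, where $\pi$ forgets the first coordinate and $\delta(y_1,\ldots,y_{n-1})=(y_1,y_1,y_2,\ldots,y_{n-1})$; the latter is well-defined because all multiplicities in $\mathcal{M}_d^{(k)}(n-1)$ are at most $k-1$, so the doubled first coordinate has multiplicity at most $k$ in $\delta(\vec y)$. It remains to null-homotope $\delta$, which I would do by iterating the merge idea inside $\mathcal{M}_d^{(k+1)}(n)$: slide additional coordinates into the doubled first coordinate so as to progressively raise its multiplicity, eventually collapsing the configuration to a constant target. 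The main obstacle is managing multiplicity constraints during this iteration---each successive merge can push a coordinate's multiplicity from $k$ to $k+1$, violating the ambient condition---so the sequence of merges must be chosen carefully (always merging into a coordinate whose current multiplicity is strictly below $k$), and one likely needs to strengthen the inductive hypothesis to assert that the null-homotopy respects suitable multiplicity bounds throughout, ensuring each successive $\delta$-type doubling remains well-defined.
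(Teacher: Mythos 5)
Your reduction to the doubling map $\delta$ and the verification that the first straight-line merge stays inside $\mathcal{M}_d^{(k+1)}(n)$ are both correct, but the argument stops exactly where the real difficulty begins: null-homotoping $\delta$. The iterated-merge continuation you sketch genuinely fails, not merely for bookkeeping reasons. Already the second merge breaks: for $k=2$ and $n>2$, after the first merge the configuration is $(x_2,x_2,x_3,\dots,x_n)$, and sliding $x_3$ into $x_2$ ends at $(x_2,x_2,x_2,x_4,\dots,x_n)$, a triple coincidence forbidden in $\mathcal{M}_d^{(3)}(n)$. For general $k$ the same happens whenever $k-2$ of the points $x_4,\dots,x_n$ already sit at $x_2$ (such configurations exist since $n>k$): the slot at $x_2$ then has multiplicity exactly $k$ after the first merge, and any further merge into it produces a $(k+1)$-fold coincidence. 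Your proposed remedy --- ``always merge into a coordinate whose current multiplicity is strictly below $k$'' --- cannot be implemented as a single continuous homotopy, because the admissible target depends on the configuration and jumps discontinuously as points collide and separate; and the ``strengthened inductive hypothesis'' you invoke is never formulated. Note also that the appeal to induction is structurally blocked: $\delta$ does not factor through the inclusion $\mathcal{M}_d^{(k)}(n-1)\hookrightarrow\mathcal{M}_d^{(k+1)}(n-1)$, since doubling a point of a merely non-$(k+1)$-equal configuration can itself create a $(k+1)$-fold coincidence.

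The paper's proof avoids all of this by moving points apart rather than together. It fixes a standard configuration of $n$ pairwise disjoint discs, first shrinks and translates the given configuration so as to be disjoint from it, and then transports the discs one at a time to their standard positions. At each moment only one disc moves; a common point of several discs can involve the moving disc together with at most $k-1$ mutually overlapping discs of the untouched part of the original configuration, or with a single disc of the pairwise disjoint standard part, so at most $k$ discs ever share a point --- which is allowed in $\mathcal{B}_d^{(k+1)}(n)$. Multiplicity never accumulates, which is precisely what your merging scheme cannot guarantee. To salvage your argument you would need to replace the collapse of $\delta$ by a ``spread out to a fixed configuration'' homotopy of this kind, at which point the first merge and the factorization through $\delta$ become unnecessary.
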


\begin{proof}
	Define a homotopy $H: B_{d}^{(k)}(n)\times[0,1]\rightarrow B_{d}^{(k+1)}(n)$. Subdivide $[0,1]$ into $n+1$ subintervals. Recall that a point in $\mathcal{B}_{d}^{(k)}(n)$ is a configuration of $n$-discs in the unit disc with the condition that the intersection of any $k$ of them is empty. Fix a point in $\mathcal{B}_{d}^{(k+1)}(n)$ where all the discs, labeled $1',2',...,n'$, are disjoint. We will call this configuration the standard position for the discs. Now take any point $P \in \mathcal{B}_{d}^{(k)}(n)$, discs labeled $1,2,...,n$. Recall that $P$ lies inside of a unit disc. We can smoothly rescale and translate this unit disc so that it is disjoint from the $1',...,n'$ discs in the standard configuration. This homotopy is done on the first subinterval $\left[0,\frac{1}{n+1}\right]$.
	
	Next, we can smoothly rescale and translate the disc labeled 1 in $P$ to the disc labeled $1'$ in the standard position during the second interval of $H$. Then we can smoothly rescale and translate the disc labeled 2 in $P$ to the disc labeled $2'$ in the standard position during the third interval of $H$. We can iteratively do this for all $n$ discs in $P$ until each disc is in the standard position in $\mathcal{B}_{d}^{(k+1)}(n)$. In the $i$-th interval of $H$, rescale and translate the  disc labeled $(i-1)$ to the standard position, for $i\geq 2$. 
	
	Note when moving the discs, up to $k$ overlaps can occur. However since the $k$ overlaps are allowed in ${B}_{d}^{(k+1)}(n)$, the homotopy is well-defined.

We show the idea of the above proof below pictorially for $\mathcal{B}_{d}^{(2)}(3) \subset \mathcal{B}_{d}^{(3)}(3)$.
\begin{figure}[H]
\begin{tikzpicture}
[nodes={draw, thick, fill=black!0}]

\node (Bd2nunit) [circle, minimum width = 3.25cm] { };
\node (denote1) [draw=none, fill=none, left of = Bd2nunit, xshift= -1.4cm] {$\mathcal{B}_{d}^{(2)}(3)\ni$};
\node (disc1) [circle, minimum width = 1.1cm, left of = Bd2nunit, xshift = 0.2cm, yshift = 0.4cm] { };
\node (disc2) [circle, minimum width = 1cm, left of = Bd2nunit, xshift = 0.6cm, yshift =-0.9cm] { };
\node (disc3) [circle, minimum width = 1.3cm, right of = Bd2nunit, xshift =-0.2cm, yshift = 0.2cm] { };
\node (1) [draw = none, fill=none, left of = Bd2nunit, xshift = 0.2cm, yshift = 0.4cm] {3};
\node (2) [draw = none, fill=none, left of = Bd2nunit, xshift=0.6cm, yshift = -0.9cm] {2};
\node (3) [draw=none, fill=none, right of = Bd2nunit, xshift=-0.2cm, yshift=0.2cm] {1};

\node (arrow1) [draw=none, fill=none, right of = Bd2nunit, xshift=0.9cm] {$\rightarrow$};

\node (moveintostandard) [circle, right of = arrow1, minimum width = 3.25cm, xshift=0.9cm] { };
\node (edisc4) [circle, dashed, minimum width=0.6cm, left of = moveintostandard, yshift = 0.5cm] { }; 
\node (edisc5) [circle, dashed, minimum width=0.6cm, left of = moveintostandard, xshift=0.9cm, yshift = 0.5cm] { };
\node (edisc6) [circle, dashed, minimum width=0.6cm, left of = moveintostandard, xshift=1.8cm, yshift = 0.5cm] { };
\node (edisc7) [circle, dashed, minimum width=1.5cm, left of = moveintostandard, xshift=1cm, yshift=-0.75cm] { };
\node (e11) [draw=none, fill=none, left of = moveintostandard, yshift=0.5cm] {$3'$};
\node (e22) [draw=none, fill=none, left of = moveintostandard, xshift=0.9cm, yshift=0.5cm] {$2'$};
\node (e33) [draw=none, fill=none, left of = moveintostandard, xshift=1.8cm, yshift=0.5cm] {$1'$};
\node (edisc8) [circle, minimum width = 0.507cm, left of = edisc7, xshift = 0.7cm, yshift = 0.2cm] { };
\node (edisc9) [circle, minimum width = 0.461cm, left of = edisc7, xshift = 0.9cm, yshift = -0.4cm] { };
\node (edisc10) [circle, minimum width = 0.6cm, left of =edisc7, xshift = 1.36cm, yshift = 0.1cm] { };
\node (e111) [draw=none, fill=none, left of = edisc7, xshift = 0.7cm, yshift=0.2cm] {3};
\node (e222) [draw = none, fill=none, left of = edisc7, xshift = 0.9cm, yshift = -0.4cm] {2};
\node (e333) [draw=none, fill=none, left of =edisc7, xshift = 1.36cm, yshift = 0.1cm] {1};

\node (extraarrow) [draw=none, fill=none, right of = moveintostandard, xshift=0.9cm] {$\rightarrow$};

\node (Bd2nnunit) [circle, right of = extraarrow, minimum width = 3.25cm, xshift=0.9cm] { };
\node (disc4) [circle, dashed, minimum width=0.6cm, left of = Bd2nnunit, yshift = 0.5cm] { }; 
\node (disc5) [circle, dashed, minimum width=0.6cm, left of = Bd2nnunit, xshift=0.9cm, yshift = 0.5cm] { };
\node (disc6) [circle, dashed, minimum width=0.6cm, left of = Bd2nnunit, xshift=1.8cm, yshift = 0.5cm] { };
\node (disc7) [circle, dashed, minimum width=1.5cm, left of = Bd2nnunit, xshift=1cm, yshift=-0.75cm] { };
\node (11) [draw=none, fill=none, left of = Bd2nnunit, yshift=0.5cm] {$3'$};
\node (22) [draw=none, fill=none, left of = Bd2nnunit, xshift=0.9cm, yshift=0.5cm] {$2'$};
\node (33) [draw=none, fill=none, left of = Bd2nnunit, xshift=1.8cm, yshift=0.5cm] {$1'$};
\node (disc8) [circle, minimum width = 0.507cm, left of = disc7, xshift = 0.7cm, yshift = 0.2cm] { };
\node (disc9) [circle, minimum width = 0.461cm, left of = disc7, xshift = 0.9cm, yshift = -0.4cm] { };
\node (disc10) [circle, minimum width = 0.6cm, left of =disc7, xshift = 1.36cm, yshift = 0.1cm] { };
\node (111) [draw=none, fill=none, left of = disc7, xshift = 0.7cm, yshift=0.2cm] {3};
\node (222) [draw = none, fill=none, left of = disc7, xshift = 0.9cm, yshift = -0.4cm] {2};
\node (333) [draw=none, fill=none, left of =disc7, xshift = 1.36cm, yshift = 0.1cm] {1};

\node (arrow2) [draw=none, fill=none, below of = Bd2nunit, xshift=-2cm, yshift=-2.5cm] {$\rightarrow$};

\node (Bd3nunit) [circle, right of = arrow2, minimum width = 3.25cm, xshift=0.9cm] { };
\node (disc11) [circle, dashed, minimum width=0.6cm, left of = Bd3nunit, yshift = 0.5cm] { }; 
\node (disc12) [circle, dashed, minimum width=0.6cm, left of = Bd3nunit, xshift=0.9cm, yshift = 0.5cm] { };
\node (disc13) [circle, minimum width=0.6cm, left of = Bd3nunit, xshift=1.8cm, yshift = 0.5cm] { };
\node (1111) [draw=none, fill=none, left of = Bd3nunit, yshift=0.5cm] {$3'$};
\node (2222) [draw=none, fill=none, left of = Bd3nunit, xshift=0.9cm, yshift=0.5cm] {$2'$};
\node (3333) [draw=none, fill=none, left of = Bd3nunit, xshift=1.8cm, yshift=0.5cm] {1};
\node (disc14) [circle, dashed, minimum width=1.5cm, left of = Bd3nunit, xshift=1cm, yshift=-0.75cm] { };
\node (disc15) [circle, minimum width = 0.507cm, left of = disc14, xshift = 0.7cm, yshift = 0.2cm] { };
\node (disc16) [circle, minimum width = 0.461cm, left of = disc14, xshift = 0.9cm, yshift = -0.4cm] { };
\node (11111) [draw=none, fill=none, left of = disc14, xshift = 0.7cm, yshift=0.2cm] {3};
\node (22222) [draw = none, fill=none, left of = disc14, xshift = 0.9cm, yshift = -0.4cm] {2};

\node (Bd3nunit2) [circle, below of = moveintostandard, minimum width = 3.25cm, yshift=-2.5cm] { };
\node (disc11) [circle, dashed, minimum width=0.6cm, left of = Bd3nunit2, yshift = 0.5cm] { }; 
\node (disc12) [circle, minimum width=0.6cm, left of = Bd3nunit2, xshift=0.9cm, yshift = 0.5cm] { };
\node (disc13) [circle, minimum width=0.6cm, left of = Bd3nunit2, xshift=1.8cm, yshift = 0.5cm] { };
\node (11111) [draw=none, fill=none, left of = Bd3nunit2, yshift=0.5cm] {$3'$};
\node (22222) [draw=none, fill=none, left of = Bd3nunit2, xshift=0.9cm, yshift=0.5cm] {2};
\node (33333) [draw=none, fill=none, left of = Bd3nunit2, xshift=1.8cm, yshift=0.5cm] {1};
\node (disc17) [circle, dashed, minimum width=1.5cm, left of = Bd3nunit2, xshift=1cm, yshift=-0.75cm] { };
\node (disc18) [circle, minimum width = 0.507cm, left of = disc17, xshift = 0.7cm, yshift = 0.2cm] { };
\node (111111) [draw=none, fill=none, left of = disc17, xshift = 0.7cm, yshift=0.2cm] {3};

\node (arrow5) [draw=none, fill=none, right of = Bd3nunit2, xshift=1cm] {$\rightarrow$};

\node (Bd3nunit3) [circle, right of = arrow5, minimum width = 3.25cm, xshift=1cm] { };
\node (disc11) [circle, minimum width=0.6cm, left of = Bd3nunit3, yshift = 0.5cm] { }; 
\node (disc12) [circle, minimum width=0.6cm, left of = Bd3nunit3, xshift=0.9cm, yshift = 0.5cm] { };
\node (disc13) [circle, minimum width=0.6cm, left of = Bd3nunit3, xshift=1.8cm, yshift = 0.5cm] { };
\node (1111111) [draw=none, fill=none, left of = Bd3nunit3, yshift=0.5cm] {3};
\node (2222222) [draw=none, fill=none, left of = Bd3nunit3, xshift=0.9cm, yshift=0.5cm] {2};
\node (3333333) [draw=none, fill=none, left of = Bd3nunit3, xshift=1.8cm, yshift=0.5cm] {1};

\node (denote2) [draw=none, fill=none, right of = Bd3nunit3, xshift=1.4cm] {$\in \mathcal{B}_{d}^{(3)}(3)$};
\node (arrow4) [draw=none, fill=none, left of=Bd3nunit2, xshift=-1cm] {$\rightarrow$};

\draw (3.75,-4) -- (3.05, -2.8);
\draw (3.35,-4.25) -- (2.55,-3.15);
\draw [dashed, ->] (3.4, -3.85) -- (3, -3.2);
\draw (0.03,-4.73) -- (0.1,-3);
\draw (-0.43,-4.7) -- (-0.5,-3);
\draw [dashed, ->] (-0.2,-4.4) -- (-0.2,-3.3);
\draw (8.2,-0.85) -- (8.7,0.45);
\draw (7.7,-.5) -- (8.1,0.6);
\draw [dashed, ->] (8.1,-0.35) -- (8.3,0.2);
\end{tikzpicture}
\end{figure}
\vspace{0.3cm}
\noindent Notice that when disc 2 is moved to $2'$, it overlaps with disc 3. However since the movement occurs in $B_{d}^{(3)}(n)$, it does not cause any issues.

	\end{proof}

As an immediate consequence of Lemma \ref{lemmaoverlappingdiscinclusionnullhom}, we get the following corollary:

\begin{corollary}\label{corollary}
	For $d\geq1$, the sequence of inclusions $\mathcal{B}_{d}^{(2)}\subset\mathcal{B}_{d}^{(3)}\subset\mathcal{B}_{d}^{(4)}\subset\dots$ induces maps in the homology with each map factoring through $\mathcal{C}$om.
	
	\begin{center}
		\begin{tikzpicture}
		[nodes={fill=black!0}]
		\node(hb2) [ ] {$H_{\ast}\mathcal{B}_{d}^{(2)}$};
		\node (hb3) [right of=hb2, xshift=1cm] {$H_{\ast}\mathcal{B}_{d}^{(3)}$};
		\node (hb4) [right of=hb3, xshift=1cm] {$H_{\ast}\mathcal{B}_{d}^{(4)}$};
		\node (empty) [right of=hb4, xshift=0.5cm] { };
		\node (com1) [below of=hb2, xshift=1cm] {$\mathcal{C}$om};
		\node (com2) [below of=hb3, xshift=1cm] {$\mathcal{C}$om};
		\node (com3) [below of=hb4, xshift=1cm] {$\mathcal{C}$om};
		\node (dots1) [right of=empty, xshift=-0.5cm] {$\cdots$};
		\node (dots2) [below of=dots1] {$\cdots$};
		
		\draw [->] (hb2) -- (hb3);
		\draw [->] (hb3) -- (hb4);
		\draw [->] (hb4) -- (empty);
		\draw [->] (hb2) -- (com1);
		\draw [right hook->] (com1) -- (hb3);
		\draw [->] (hb3) -- (com2);
		\draw [right hook->] (com2) -- (hb4);
		\draw [->] (hb4) -- (com3);
		\draw [right hook->] (com3) -- (empty);
		\end{tikzpicture}
	\end{center}
\end{corollary}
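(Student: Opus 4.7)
The plan is to derive the corollary directly from Lemma \ref{lemmaoverlappingdiscinclusionnullhom} using the fact that a null-homotopic map factors up to homotopy through a point. Concretely, I would argue arity-by-arity. For each $n$, the inclusion $\iota_k : \mathcal{B}_d^{(k)}(n) \hookrightarrow \mathcal{B}_d^{(k+1)}(n)$ is null-homotopic by the Lemma, so it admits a factorization (up to homotopy) as a composition $\mathcal{B}_d^{(k)}(n) \xrightarrow{c} \{p\} \xrightarrow{j} \mathcal{B}_d^{(k+1)}(n)$, where $c$ is the constant map and $j$ picks out the standard position used in the Lemma's proof. Applying $H_\ast$ and using $H_\ast(\{p\}) \cong \mathbb{Z}$ concentrated in degree zero, the induced map on homology factors through a copy of $\mathbb{Z}$ in degree zero.

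The next step is to identify this intermediate $\mathbb{Z}$ with $\mathcal{C}\mathrm{om}(n)$. Each space $\mathcal{B}_d^{(k)}(n)$ is path-connected — indeed, the homotopy constructed in the Lemma shows that any configuration can be moved to the standard one inside $\mathcal{B}_d^{(k+1)}(n)$, and the same argument internal to $\mathcal{B}_d^{(k)}(n)$ itself (with disjoint target discs) shows path-connectedness — so $H_0 \mathcal{B}_d^{(k)}(n) \cong \mathbb{Z} = \mathcal{C}\mathrm{om}(n)$ canonically. Under these identifications, the projection $H_\ast \mathcal{B}_d^{(k)}(n) \twoheadrightarrow H_0 \mathcal{B}_d^{(k)}(n) \cong \mathcal{C}\mathrm{om}(n)$ is the augmentation induced by $c$, and $\mathcal{C}\mathrm{om}(n) \cong H_0 \mathcal{B}_d^{(k+1)}(n) \hookrightarrow H_\ast \mathcal{B}_d^{(k+1)}(n)$ is the canonical inclusion induced by $j$, which matches the standard inclusion $\mathcal{C}\mathrm{om} \hookrightarrow H_\ast \mathcal{B}_d^{(k+1)}$ recorded in the introduction.

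Finally, I would note that both triangles (the ``down-then-in'' composition through $\mathcal{C}\mathrm{om}$ and the ``direct'' inclusion-induced map) give the same map on $H_\ast$, since both are null-homotopic on positive-degree classes and both send the generator of $H_0$ to the generator of $H_0$ of the target. Assembling over all $n$ and verifying that these assemble into maps of symmetric sequences is automatic, because path-connectedness and the augmentation are preserved by the operad and bimodule composition maps. I do not anticipate a real obstacle here: the whole content of the corollary lies in the Lemma, and the remaining verifications are bookkeeping about $H_0$ and connectedness.
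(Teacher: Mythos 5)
Your proposal follows the same route the paper takes: the corollary is an immediate consequence of Lemma \ref{lemmaoverlappingdiscinclusionnullhom}, since a null-homotopic map factors up to homotopy through a point, and $H_\ast(\mathrm{pt})=\mathbb{Z}$ in degree $0$ gives the factorization through $\mathcal{C}\mathrm{om}(n)$ arity by arity. One correction, though: your blanket claim that ``each space $\mathcal{B}_d^{(k)}(n)$ is path-connected'' fails for $d=1$, $k=2$, where $\mathcal{B}_1^{(2)}(n)=\mathcal{B}_1(n)$ has $n!$ components and $H_0\mathcal{B}_1^{(2)}(n)=\mathcal{A}ssoc(n)\neq\mathcal{C}\mathrm{om}(n)$; the paper flags exactly this case after the corollary. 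The factorization itself is unaffected --- the downward map is the augmentation induced by the constant map, which in this case is the projection $\mathcal{A}ssoc\to\mathcal{C}\mathrm{om}$ rather than an isomorphism onto $H_0$ of the source --- so you should replace ``projection onto $H_0$ of the source'' by ``augmentation to $H_\ast(\mathrm{pt})$'' to cover $d=1$, $k=2$. For $k\geq 3$ (any $d\geq 1$) and for $d\geq 2$, $k=2$, the spaces are indeed connected and your identification is fine.
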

	
For $d\geq2$ or $k\geq3$, the map $H_{0}\mathcal{B}_{d}^{(k)}\rightarrow \mathcal{C}om$ is just the projection to $H_{0}\mathcal{B}_{d}^{(k)}=\mathcal{C}om$. For $d=1, k=2$, $H_{\ast}\mathcal{B}_{1}^{(2)}=\mathcal{A}ssoc$. The map $H_{\ast}\mathcal{B}_{1}^{(2)}\rightarrow \mathcal{C}om$ is the natural projection $\mathcal{A}ssoc\rightarrow \mathcal{C}om$. The map $\mathcal{C}om \hookrightarrow H_{\ast}\mathcal{B}_{d}^{(k)}$ is always the inclusion $H_{0}\mathcal{B}_{d}^{(k)} \rightarrow H_{\ast}\mathcal{B}_{d}^{(k)}$, as $H_{0}\mathcal{B}_{d}^{(k)}=\mathcal{C}om$ for $k\geq3$.

\section{Compositions in $H_{\ast}\mathcal{B}_{d}^{(\bullet)}$}

In the previous Section \ref{sectionfilteredoperad} we understood the sequence of maps \eqref{inducedsequencehom}, now we want to understand the compositions maps \eqref{filteredoperadhomcomp}. Corollary \ref{corollary} tells us that the map $H_{\ast}\mathcal{B}_{d}^{(k)}\rightarrow H_{\ast}\mathcal{B}_{d}^{(k+1)}$ can be factored through $\mathcal{C}$om. The spherical cycle $\{x_{1},...,x_{k}\}$ is the boundary of the chain (disc) $c(x_{1},...,x_{k}): \{x_{1},...,x_{k}\} = \partial (c(x_{1},...,x_{k}))$ in  $\mathcal{B}_{d}^{(k+1)}(k)$. Hence $\{x_{1},...,x_{k}\} = 0$ in $H_{\ast}\mathcal{B}_{d}^{(k+1)}(k)$, by Corollary \ref{corollary}. The chain $c(x_{1},...,x_{k})$ can be explictly described as follows:
\begin{equation}\label{sphererepresentation2}
|x_{1}|^{2}+|x_{2}|^{2}+...+|x_{k}|^{2} \leq \varepsilon^{2} \qquad \qquad \sum_{i=1}^{k} x_{i} = 0
\end{equation}
where $x_{i}$ represents the center of the $i$-th disc.
\subsection{Examples}\label{Subsectionexamples}
We want to explicitly describe the composition maps
\begin{equation*}
\circ_{i}: H_{\ast}F_{k_{1}}\mathcal{O}(n_{1}) \otimes H_{\ast}F_{k_{2}}\mathcal{O}(n_{2}) \rightarrow H_{\ast}F_{k_{1}+k_{2}}\mathcal{O}(n_{1}+n_{2}-1).
\end{equation*}
Before describing the general case of composition, let us examine some examples of the composition maps on the level of homology.

Let $k_{1}=k_{2}=3$, $n_{1}=5$, and $n_{2}=3$:
\begin{equation*}
\circ_{i}:H_{\ast}\mathcal{B}_{d}^{(3)}(5) \otimes H_{\ast}\mathcal{B}_{d}^{(3)}(3) \rightarrow H_{\ast}\mathcal{B}_{d}^{(4)}(7).
\end{equation*}

Let us examine the composition when $i=5$:
\begin{equation*}
[\{x_{1},x_{2},x_{3}\},x_{4}]\cdot x_{5} \circ_{5} \{x_{1},x_{2},x_{3}\} = [\{x_{1},x_{2},x_{3}\},x_{4}]\cdot \{x_{5},x_{6},x_{7}\}\in \mathrm{Im}(H_{\ast}\mathcal{B}_{d}^{(3)}(7))\subset H_{\ast}\mathcal{B}_{d}^{(4)}(7).
\end{equation*}

Here we insert the second brace in $x_{5}$. This element is in the image of $H_{\ast}\mathcal{B}_{d}^{(3)}(7)$. By the Corollary \ref{corollary}, $[\{x_{1},x_{2},x_{3}\},x_{4}]\cdot \{x_{5},x_{6},x_{7}\} = 0$. Explicitly this element is the boundary of the chain $[\{x_{1},x_{2},x_{3}\},x_{4}]\cdot c(x_{5},x_{6},x_{7})$.

Now let us look at the composition when $i=4$:

\begin{equation*}
[\{x_{1},x_{2},x_{3}\},x_{4}]\cdot x_{5} \circ_{4} \{x_{1},x_{2},x_{3}\} = [\{x_{1},x_{2},x_{3}\}, \{x_{4},x_{5},x_{6}\}] \cdot x_{7}.
\end{equation*}

Note that 

\begin{equation*}
[ \{x_{1},x_{2},x_{3}\}, \{x_{4},x_{5},x_{6}\}] \cdot x_{7} = \partial([\{x_{1},x_{2},x_{3}\}, c(x_{4},x_{5},x_{6})] \cdot x_{7}).
\end{equation*}

This element is also zero by the same argument as above. 

Lastly, let us look at the composition for $i=3$:

\begin{equation*}
[\{x_{1},x_{2},x_{3}\},x_{4}]\cdot x_{5} \circ_{3} \{x_{1},x_{2},x_{3}\} = [\{x_{1},x_{2},\{x_{3},x_{4},x_{5}\}\},x_{6}]\cdot x_{7}.
\end{equation*}

Note that compositions $\circ_{1}$ and $\circ_{2}$ give similar results to $\circ_{3}$. We claim that resulting element $[\{x_{1},x_{2},\{x_{3},x_{4},x_{5}\}\},x_{6}]\cdot x_{7}\in H_{\ast}\mathcal{B}_{d}^{(4)}(7)$ is non-trivial. To show this, we must understand the composition of braces, that is, $\{x_{1},x_{2},\{x_{3},x_{4},x_{5}\}\}$, which can be realized as a map $S^{2d-1}\times S^{2d-1}\rightarrow \mathcal{B}_{d}^{(4)}(7)$. We can geometrically see $\{x_{1},x_{2},\{x_{3},x_{4},x_{5}\}\}$ as follows:

\begin{tikzpicture}

\node (center) [draw=none, fill=none] { };
\node (X1) [draw=none, fill=none, right of = center, xshift=2cm] {1};
\node (X2) [draw=none, fill=none, right of = X1, xshift=2cm] {2};
\node (X3) [draw=none, fill=none, right of = X2, xshift=1cm] {3};
\node (X4) [draw=none, fill=none, right of = X3] {4};
\node (X5) [draw=none, fill=none, right of = X4] {5};

\node (dot1) [draw=none, fill=none, below of = X1, yshift=0.7cm] {$\bullet$};
\node (dot2) [draw=none, fill=none, below of = X2, yshift=0.7cm] {$\bullet$};
\node (dot3) [draw=none, fill=none, below of = X3, yshift=0.7cm] {$\bullet$};
\node (dot4) [draw=none, fill=none, below of = X4, yshift=0.7cm] {$\bullet$};
\node (dot5) [draw=none, fill=none, below of = X5, yshift=0.7cm] {$\bullet$};

\draw (3,-0.3) to [out=70, in=110] (6,-0.3);
\draw (3,-0.3) to [out=-70, in=-110] (6,-0.3);
\draw (6,-0.3) to [out=70, in=110] (9,-0.3);
\draw (6,-0.3) to [out=-70, in=-110] (9,-0.3);
\draw (8,-0.3) to [out=70, in=110] (9,-0.3);
\draw (8,-0.3) to [out=-70, in=-110] (9,-0.3);
\draw (9,-0.3) to [out=70, in=110] (10,-0.3);
\draw (9,-0.3) to [out=-70, in=-110] (10,-0.3);
\end{tikzpicture}

Here $x_{3}, x_{4}, x_{5}$ orbit closely around each other while at the same time they collectively as a cluster closely orbit with $x_{1}$ and $x_{2}$.

\subsection{Sign Conventions}

We choose our sign convention so that it agrees with that of the Dobrinskaya-Turchin paper \cite{D_T}. When we describe elements in the homology of ${B}_{d}^{(k)}$ in terms of products of iterated brackets, cycles are realized as products of chains (usually spheres). We read the ordering of the factors (spheres) from the way the product of iterated brackets is written. We follow the rule that the spherical cycle corresponding to the brace is taken into account in the ordering by the left brace. Similarly, the spherical cycle corresponding  to the bracket is taken into account by the comma. Non-spherical cycles are represented by letters (such as Y and Z) and are taken into account in the ordering when they appear.

\textbf{Example:}

$\textbf{\{}x_{1},...,x_{k-1},[\textbf{Y,Z}]\}$ corresponds to a cycle realized by the product $S^{(k-1)d-1} \times Y \times S^{d-1}\times Z$. First we get $S^{(k-1)d-1}$ due to the brace since the sign contribution is placed on the left most brace. Then we have all of the $x_{i}$'s which do not contribute to the sign. Next we have the contribution from $Y$ and then $S^{d-1}$, the contribution from the bracket, recall that it is taken into account by the comma. Lastly, we have the contribution from $Z$. 

\subsection{Compositions}

From the examples in Section \ref{Subsectionexamples} one can see that many compositions will be trivial. In fact, all non-trivial elements come from either composing braces inside of braces, or from degree 0 classes. We can categorize elements in $H_{\ast}\mathcal{B}_{d}^{(k)}$ into 3 different types:

\noindent I) $H_{0}\mathcal{B}_{d}^{(k)}(n)$ = $\mathcal{C}om(n)$.\newline
II) Products with exactly one iterated bracket (the other factors singletons) that contain exactly one brace.\newline
III) The space spanned by all the other products of iterated brackets. In particular, elements of this type will have at least two braces.

For all $i$, I $\circ_{i}$ I $\neq 0$ as these are just compositions in $\mathcal{C}om$. Compositions of the form II $\circ_{i}$ II $\neq 0$ if and only if $i$ is inside of the brace. We claim all the other compositions are trivial.

\begin{proposition} \label{lemmacompositionstrivial}
	Assume both $k_{1}$ and $k_{2}$ are greater than 2. Then the composition maps
	\begin{equation}
	\circ_{i}: H_{\ast}\mathcal{B}_{d}^{(k_{1})}(n_{1}) \otimes H_{\ast}\mathcal{B}_{d}^{(k_{2})}(n_{2}) \rightarrow H_{\ast}\mathcal{B}_{d}^{(k_{1}+k_{2}-2)}(n_{1}+n_{2}-1)
	\end{equation}
	are trivial restricted on I $\circ_{i}$ II, I $\circ_{i}$ III, II $\circ_{i}$ I, II $\circ_{i}$ III, III $\circ_{i}$ I, III $\circ_{i}$ II, III $\circ_{i}$ III for all $i$ and on II $\circ_{i}$ II if $i$ is outside of the brace.
\end{proposition}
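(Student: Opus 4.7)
The strategy is to realize $\alpha\circ_i\beta$ as a boundary in the chain complex of $\mathcal{B}_d^{(k_1+k_2-2)}(n_1+n_2-1)$, hence as zero in homology. The key tool, already featured in Subsection~\ref{Subsectionexamples}, is that every brace becomes a boundary one overlap level higher: $\{x_1,\dots,x_k\}=\partial c(x_1,\dots,x_k)$ in $\mathcal{B}_d^{(k+1)}$, where $c$ is the disc chain \eqref{sphererepresentation2}. Because $k_1,k_2\geq 3$, both $k_1+1$ and $k_2+1$ are at most $k_1+k_2-2$, so any $k_1$- or $k_2$-brace that survives in $\alpha\circ_i\beta$ \emph{unmerged} (i.e.\ that does not form a brace-in-brace with a brace of the other factor at position $i$) is already a boundary in the target operad.

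The listed cases split naturally into two families. In the \emph{reduction family} --- I$\circ_i$II, I$\circ_i$III, II$\circ_i$I, III$\circ_i$I, II$\circ_i$II with $i$ outside $\alpha$'s brace, and the subcases of II$\circ_i$III, III$\circ_i$II, III$\circ_i$III in which $i$ lies outside every brace of $\alpha$ --- I pick cycle representatives whose brace clusters sit in geometrically disjoint regions. The composition then has overlap at most $\max(k_1-1,k_2-1)\leq k_1+k_2-3$ (using $\min(k_1,k_2)\geq 3$ when both sides carry a brace, and the single-sided bound $k_j-1$ otherwise). So the composition lifts to $\mathcal{B}_d^{(k)}$ for some $k<k_1+k_2-2$, and iterating Corollary~\ref{corollary} along the inclusion chain into $\mathcal{B}_d^{(k_1+k_2-2)}$ forces the positive-degree class $\alpha\circ_i\beta$ to zero. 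In the \emph{merging family} --- the subcases of II$\circ_i$III, III$\circ_i$II, III$\circ_i$III in which $i$ sits inside a brace of $\alpha$ --- a brace-in-brace does form at position $i$, but the type-III hypothesis guarantees that at least one unmerged brace $B$ remains (an extra brace of $\alpha$, or an extra brace of $\beta$ sitting away from the merge site). Using the Leibniz relations \eqref{thmleibniz1} and \eqref{thmleibniz2} to pull the product/bracket structure of $\beta$ out of the brace of $\alpha$, I rewrite $\alpha\circ_i\beta$ as a sum of terms of the form $\omega\cdot B$ or $[\omega,B]$, where $\omega$ collects the brace-in-brace together with the remaining structure and is a cycle. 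Substituting $B=\partial c(B)$ and using the chain-level Leibniz identities $\omega\cdot\partial c(B)=\pm\partial(\omega\cdot c(B))$ and $[\omega,\partial c(B)]=\pm\partial[\omega,c(B)]$ (valid because $\partial\omega=0$) exhibits each term as a boundary.

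The chief technical obstacle is the rewriting step in the merging family: one must argue that an unmerged brace $B$ can always be brought to an outer position by a bounded sequence of Leibniz and Jacobi moves, regardless of its initial nesting depth in $\alpha\circ_i\beta$. A secondary point is to verify that the chains $\omega\cdot c(B)$ and $[\omega,c(B)]$ do lie inside $\mathcal{B}_d^{(k_1+k_2-2)}$: the disc chain $c(B)$ attains overlap $k_j$ on its degenerate stratum, but since it is placed in a region disjoint from $\omega$ (whose overlap is at most $k_1+k_2-3$), the total overlap is at most $\max(k_1+k_2-3,k_j)=k_1+k_2-3$, which is permitted.
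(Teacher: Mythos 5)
Your \emph{reduction family} is essentially the paper's argument: the composite lands in the image of $H_{>0}\mathcal{B}_d^{(k')}(n_1+n_2-1)$ for some $k'<k_1+k_2-2$ (namely $k_2$, $k_1$, or $\max(k_1,k_2)$ depending on the case), and Lemma \ref{lemmaoverlappingdiscinclusionnullhom} kills it. The paper handles III$\circ_i$II and III$\circ_i$III slightly differently --- for any $i$ some brace of $\alpha$ is untouched by the composition and bounds the disc chain $c$ in the target --- but your lifting argument works equally well there when $i$ is outside all braces, and the ``untouched brace bounds'' observation is exactly your fallback when $i$ is inside one brace of a type-III $\alpha$. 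So far, so good.

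The gap is in the \emph{merging family}, specifically II$\circ_i$III with $i$ inside the brace of $\alpha$ (and the analogous rewriting you invoke elsewhere). You propose to pull the product/bracket structure of $\beta$ out of $\alpha$'s brace ``using the Leibniz relations \eqref{thmleibniz1} and \eqref{thmleibniz2}.'' Those relations only govern the right action of $H_*\mathcal{B}_d$ on the brace, i.e.\ the insertion of the \emph{degree-zero} classes $x_k\cdot x_{k+1}$ and the degree-$(d-1)$ bracket of two \emph{points}; they say nothing about inserting a positive-degree class $\beta=Y\cdot Z$ or $[Y,Z]$ built from $k_2$-braces, which is a composition at a strictly higher filtration level ($k_1+k_2-2$) where new identifications could in principle occur. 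What is needed is the generalized Leibniz relation
\begin{equation*}
\{x_1,\dots,x_{k_1-1},Y\cdot Z\}=(-1)^{|Y|((k_1-1)d-1)}\,Y\cdot\{x_1,\dots,x_{k_1-1},Z\}+\{x_1,\dots,x_{k_1-1},Y\}\cdot Z
\end{equation*}
and its bracket analogue (Lemma \ref{newleibnizeqs} in the paper), whose signs depend on $|Y|$ and which the paper explicitly notes \emph{do not} follow from \eqref{thmleibniz1} and \eqref{thmleibniz2}; they require a separate geometric proof (pulling $Z$ to infinity and excising tubular neighborhoods of the forbidden strata, with the cylinder's far end vanishing because $\{x_1,\dots,x_{k_1-1},1\}=0$). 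Once that lemma is available, your plan goes through: iterating it leaves, in every summand, at least one $k_2$-brace of $\beta$ outside $\alpha$'s brace, and that brace bounds $c(x_{i_1},\dots,x_{i_{k_2}})$ in $\mathcal{B}_d^{(k_1+k_2-2)}$; the ``Jacobi moves'' you mention are not needed. As written, though, the rewriting step rests on relations that do not apply, so the merging family is not proved.
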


\begin{proof}
	\noindent \textit{Cases $I\circ_{i}II$} and \textit{$I\circ_{i}III$}: 
	Let $x_{1}\cdot...\cdot x_{n_{1}}\in H_{0}\mathcal{B}_{d}^{(k_{1})}(n_{1})$ be of type I and $\beta(x_{1},...,x_{n_{2}}) \in H_{\ast}\mathcal{B}_{d}^{(k_{2})}(n_{2})$ be of type II or III.
	Compositions in these cases are as follows:
	\begin{equation*}
	x_{1}\cdot...\cdot x_{n_{1}}\circ_{i} \beta(x_{1},...,x_{n_{2}})= x_{1}\cdot...\cdot x_{i-1}\cdot\beta(x_{i},...,x_{i+n_{2}-1})\cdot...\cdot x_{n_{1}+n_{2}-1}.
	\end{equation*}
	
	The result of this composition is in $\mathrm{Im}(H_{>0}\mathcal{B}_{d}^{(k_{2})}(n_{1}+n_{2}-1)) \subset H_{>0}\mathcal{B}_{d}^{(k_{1}+k_{2}-2)}(n_{1}+n_{2}-1)$. Since $k_{1}$ is assumed to be greater than 2, by Lemma \ref{lemmaoverlappingdiscinclusionnullhom}, the compositions of these forms are trivial.
	
	\textit{Cases $II\circ_{i}I$} and \textit{$III\circ_{i}I$}: 
	Let $\alpha(x_{1},...,x_{n_{1}}) \in H_{\ast}\mathcal{B}_{d}^{(k_{1})}(n_{1})$ be of type II or III and $x_{1}\cdot...\cdot x_{n_{2}}\in H_{0}\mathcal{B}_{d}^{(k_{2})}(n_{2})$ be of type I. Compositions in these cases are as follows:
	\begin{equation*}
	\alpha(x_{1},...,x_{n_{1}}) \circ_{i} x_{1}\cdot...\cdot x_{n_{1}} = \alpha(x_{1},...,x_{i-1}, x_{i}\cdot...\cdot x_{i+n_{2}-1},x_{i+n_{2}},...,x_{n_{1}+n_{2}-1}).
	\end{equation*}
	If $i$ is not in the brace of $\alpha(x_{1},...,x_{n_{1}})$ then the result of the composition is clearly in $\mathrm{Im}(H_{>0}\mathcal{B}_{d}^{(k_{1})}(n_{1}+n_{2}-1))\subset H_{>0}\mathcal{B}_{d}^{(k_{1}+k_{2}-2)}(n_{1}+n_{2}-1)$. Since $k_{2}$ is assumed to be greater than 2, by Lemma \ref{lemmaoverlappingdiscinclusionnullhom}, the compositions of these forms are trivial.
	
	\textit{Case $II\circ_{i}II$}: 
	Let $\alpha(x_{1},...,x_{n_{1}})\in H_{\ast}\mathcal{B}_{d}^{(k_{1})}(n_{1})$ be an element of type II and let $\beta(x_{1},...,x_{n_{2}})\in H_{\ast}\mathcal{B}_{d}^{(k_{2})}(n_{2})$ be another element of type II.
	
	Compositions are as follows:
	\begin{equation*}
	\alpha(x_{1},...,x_{n_{1}}) \circ_{i} \beta(x_{1},...,x_{n_{2}}) = \alpha(x_{1},...,x_{i-1},\beta(x_{i},..,x_{i+n_{2}-1
	}),x_{i+n_{2}},...,x_{n_{1}+n_{2}-1}).
	\end{equation*}
	Since $i$ is not inside the brace, the result of the composition is in $\mathrm{Im}(H_{>0}\mathcal{B}_{d}^{(k')}(n_{1}+n_{2}-1))$, where $k' = \mathrm{max}(k_{1},k_{2})$. Then  $\mathrm{Im}(H_{>0}\mathcal{B}_{d}^{(k')}(n_{1}+n_{2}-1))\subset H_{>0}\mathcal{B}_{d}^{(k_{1}+k_{2}-2)}(n_{1}+n_{2}-1)$. Since $k_{1}$ and $k_{2}$ are assumed to be greater than 2, by Lemma \ref{lemmaoverlappingdiscinclusionnullhom}, any composition of this form is trivial.
	
	\textit{Cases $III\circ_{i}II$} and \textit{$III\circ_{i}III$}: 	
	Let $\alpha(x_{1},...,x_{n_{1}})\in H_{\ast}\mathcal{B}_{d}^{(k_{1})}(n_{1})$ be an element of type III and let $\beta(x_{1},...,x_{n_{2}})\in H_{\ast}\mathcal{B}_{d}^{(k_{2})}(n_{2})$ be an element of type II or III. Compositions are as follows:
	\begin{equation*}
	\alpha(x_{1},...,x_{n_{1}}) \circ_{i}\beta(x_{1},...,x_{n_{2}}) = \alpha(x_{1},...,x_{i-1},\beta(x_{i},...,x_{i+n_{2}-1}),...,x_{n_{1}+n_{2}-1}).
	\end{equation*}
	Since $\alpha(x_{1},...,x_{n_{1}})$ is of type III, it has at least two braces. Therefore for any $i$, there is at least one brace of $\alpha(x_{1},...,x_{n_{1}})$ that is unaffected by the composition. That brace is of length $k_{1}$ and is the boundary of the  chain $c(x_{1},...,x_{k_{1}})$. Therefore the composition is trivial.
	
	$\mathbf{Example:}$ For $k_{1}=3$, $k_{2}=4$, $n_{1}=6$, $n_{2}=4$, we get a map $\circ_{2}: \mathcal{B}_{d}^{(3)}(6) \times \mathcal{B}_{d}^{(4)}(4) \rightarrow \mathcal{B}_{d}^{(5)}(9)$. Let $\alpha = [\{x_{1},x_{2},x_{3}\},\{x_{4},x_{5},x_{6}\}]$ and $\beta = \{x_{1},x_{2},x_{3},x_{4}\}$. The composite cycle $\alpha \circ_{2} \beta$ can be realized as a product of spheres $S^{2d-1}\times S^{3d-1}\times S^{d-1}\times S^{2d-1}$, which is a boundary $\partial (S^{2d-1}\times S^{3d-1}\times S^{d-1}\times D^{2d})$:
	\begin{align*}
	[\{x_{1},x_{2},x_{3}\},\{x_{4},x_{5},x_{6}\}]\circ_{2} \{x_{1},x_{2},x_{3},x_{4}\} &= [\{x_{1},\{x_{2},x_{3},x_{4},x_{5}\},x_{6}\},\{x_{7},x_{8},x_{9}\}]\\ &=\partial[\{x_{1},\{x_{2},x_{3},x_{4},x_{5}\},x_{6}\},c(x_{7},x_{8},x_{9})]\\ &=0.
	\end{align*}
	For the inner brace, $\{x_{2},x_{3},x_{4},x_{5}\}$, any three of the discs 2, 3, 4, and 5 can overlap. If we replace $\{x_{2},x_{3},x_{4},x_{5}\}$ with Y in the brace $\{x_{1},\{x_{2},x_{3},x_{4},x_{5}\},x_{6}\}$, we have $\{x_{1},Y,x_{6}\}$ and any two of the discs 1, $Y$, and 6 can overlap. Therefore if $Y$ overlaps with either 1 or 6 there is at most four discs overlapping, so the result is indeed in $\mathcal{B}_{d}^{(5)}(9)$. However since $\{x_{7},x_{8},x_{9}\}$ bounds a disc, as seen above, the result of the composition is zero in $\mathcal{B}_{d}^{(5)}(9)$.
	
	\medskip
	
	The remaining case $II\circ_{i}III$ is the most difficult and we will need the following lemma:
	
	\begin{lemma} \label{newleibnizeqs}
		Let $\{x_{1},...,x_{k_{1}}\}\in H_{(k_{1}-1)d-1}\mathcal{B}_{d}^{(k_{1})}(k_{1})$ and
		let $\beta(x_{1},...,x_{n_{2}})\in H_{\ast}\mathcal{B}_{d}^{(k_{2})}(n_{2})$ be an element of type III.
		
		We can either write $\beta = Y\cdot Z$ or $\beta = [Y, Z]$ for some $Y$ and $Z$.
		When $\beta = Y\cdot Z$, then
		\begin{equation}\label{lemma4.5leibnizeq1}
		\{x_{1},...,x_{k_{1}-1},(Y\cdot Z)\} = (-1)^{\abs{Y}((k_{1}-1)d-1)} Y\cdot\{x_{1},...,x_{k_{1}-1},Z\} + \{x_{1},...,x_{k_{1}-1},Y\} \cdot Z,
		\end{equation}
		
		and when $\beta = [Y, Z]$, then
		\begin{equation}\label{lemma4.5leibnizeq2}
		\{x_{1},...,x_{k_{1}-1},[Y, Z]\} = (-1)^{(\abs{Y}+d-1)((k_{1}-1)d-1)} [Y,\{x_{1},...,x_{k_{1}-1},Z\}] + [\{x_{1},...,x_{k_{1}-1},Y\},Z].
		\end{equation}
		
	\end{lemma}
	
	\begin{proof}
		The formulas \eqref{lemma4.5leibnizeq1} and \eqref{lemma4.5leibnizeq2} do not follow from \eqref{thmleibniz1} and \eqref{thmleibniz2}, but are proved by a similar argument as in \cite[Examples 5.2 and 5.3]{D_T}, which was inspired from \cite{Bary}. Consider the cycle $\{x_{1},...,x_{k_{1}-1},(Y\cdot Z)\}$.  When we pull $Z$ far away, it forms a chain,
		which might have a forbidden $(k_{1}+k_{2}-1)$-overlap. This could only happen near the plane  $$x_{1}=...=x_{k_{1}-1}=Z,$$ where abusing notation, $Z$ denotes the center of mass of points appearing in the chain $Z$. We remove a small tubular neighborhood of this forbidden plane and this produces the cycle $Y\cdot\{x_{1},...,x_{k_{1}-1},Z\}$. On the other hand when $Z$ is far away we get the cycle $\{x_{1},...,x_{k_{1}-1},Y\} \cdot Z$. This proves the relation \eqref{lemma4.5leibnizeq1}. 
		
		Geometrically we can see the chain $C$ below.
		
		\vspace{0.3cm}
		
		\begin{tikzpicture}
		[nodes={draw, thick, fill=black!0}]
		
		\fill [gray!20] (2,-1.25) -- (2,-1.25) arc (270:90:-0.5  and 1.25) -- (11,1.25) arc (90:270:-0.5 and 1.25);
		\draw (2,0) ellipse (0.5 and 1.25);
		\draw (2,-1.25) -- (11,-1.25);
		\draw (2,1.25) -- (11,1.25);
		\draw (11,-1.25) arc (270:90:-0.5  and 1.25);
		\draw [dashed] (11,-1.25) arc (270:90:0.5  and 1.25);
		\filldraw[color=black!100, fill=black!0] (6.5,0.1) circle (0.3);
		
		\draw [->] (2,-2) -- (2,-1.25);
		\draw [->] (6.5,-2) -- (6.5,-0.2); 
		\draw [->] (11,-2) -- (11,-1.25);
		
		\node (center) [draw=none, fill=none] { };
		\node (bullet) [draw=none, fill=none, right of=center, xshift=5.5cm, yshift=0.1cm] {$\bullet$};
		\node (label1) [draw=none, fill=none, right of=center, xshift=1cm, yshift=-2.2cm] {$\{x_{1},...,x_{k_{1}-1},(Y\cdot Z)\}$};
		\node (label2) [draw=none, fill=none, right of=center, xshift=5.5cm, yshift=-2.2cm] {$Y\cdot\{x_{1},...,x_{k_{1}-1},Z\}$};
		\node (label3) [draw=none, fill=none, right of=center, xshift=10cm, yshift=-2.2cm] {$\{x_{1},...,x_{k_{1}-1},Y\} \cdot Z$};
		\end{tikzpicture}
		
		\vspace{0.3cm}
		
		Now consider the cycle $\{x_{1},...,x_{k_{1}-1},[Y, Z]\}$. We pull $[Y,Z]$ together far away. This produces a chain that intersects forbidden strata. Notice $Y$ and $Z$ rotate around one another and thus never meet.
		This chain meets the plane
		\[x_{1}=x_{2}=...=x_{k_{1}-1} = Z.\] By removing a tubular neighborhood of this intersection with the chain we get the cycle $\{x_{1},...,x_{k_{1}-1},Z\}$ at the boundary near every point of intersection. Simultaneously this cycle rotates around $Y$ since $x_{1},...,x_{k_{1}-1}$ have collided with $Z$. Hence we have $[Y,\{x_{1},...,x_{k_{1}-1},Z\}]$ as part of the boundary of our chain. Similarly the intersection with the plane  \[x_{1}=x_{2}=...=x_{k_{1}-1}=Y\] produces the cycle $[\{x_{1},...,x_{k_{1}-1},Y\},Z]$. On the other end of the cylinder, the boundary is given by $\{x_{1},...,x_{k_{1}-1},1\} \cdot [Y,Z]$ and is 0 in the homology as $\{x_{1},...,x_{k_{1}-1},1\} =0$ in $H_{>0}\mathcal{B}_{d}^{(k_{1})}(k_{1}-1)=0$. There are no other forbidden planes to contribute and thus we obtain \eqref{lemma4.5leibnizeq2}. Geometrically we can see the chain $C$ below.
		
		\vspace{0.3cm}
		
		\begin{tikzpicture}
		[nodes={draw, thick, fill=black!0}]
		
		\fill [gray!20] (2,-1.25) -- (2,-1.25) arc (270:90:-0.5  and 1.25) -- (11,1.25) arc (90:270:-0.5 and 1.25);
		\draw (2,0) ellipse (0.5 and 1.25);
		\draw (2,-1.25) -- (11,-1.25);
		\draw (2,1.25) -- (11,1.25);
		\draw (11,-1.25) arc (270:90:-0.5  and 1.25);
		\draw [dashed] (11,-1.25) arc (270:90:0.5  and 1.25);
		\filldraw[color=black!100, fill=black!0] (4.8,0.1) circle (0.3);
		\filldraw[color=black!100, fill=black!0] (8.2,0.1) circle (0.3);
		
		\draw [->] (2,-2) -- (2,-1.25);
		\draw [->] (4.8,-2.6) -- (4.8,-0.2); 
		\draw [->] (8.2,-2.6) -- (8.2,-0.2);
		\draw [->] (11,-2) -- (11,-1.25);
		
		\node (center) [draw=none, fill=none] { };
		\node (bullet1) [draw=none, fill=none, right of=center, xshift=3.8cm, yshift=0.1cm] {$\bullet$};
		\node (bullet2) [draw=none, fill=none, right of=center, xshift=7.2cm, yshift=0.1cm] {$\bullet$};
		\node (label1) [draw=none, fill=none, right of=center, xshift=1cm, yshift=-2.2cm] {$\{x_{1},...,x_{k_{1}-1},[Y, Z]\}$};
		\node (label2) [draw=none, fill=none, right of=center, xshift=3.8cm, yshift=-2.8cm] {$[Y,\{x_{1},...,x_{k_{1}-1},Z\}]$};
		\node (label3) [draw=none, fill=none, right of=center, xshift=7.2cm, yshift=-2.8cm] {$[\{x_{1},...,x_{k_{1}-1},Y\},Z]$};
		\node (label4) [draw=none, fill=none, right of=center, xshift=10cm, yshift=-2.2cm] {$\{x_{1},...,x_{k_{1}-1},1\} \cdot [Y,Z]$};
		\end{tikzpicture}	
		
	\end{proof}	
	
	\textit{Case $II\circ_{i}III$}: 
	Let $\alpha(x_{1},...,x_{n_{1}})\in H_{\ast}\mathcal{B}_{d}^{(k_{1})}(n_{1})$ be an element of type II and let $\beta(x_{1},...,x_{n_{2}})\in H_{\ast}\mathcal{B}_{d}^{(k_{2})}(n_{2})$ be an element of type III. Compositions are as follows:
	\begin{equation*}
	\alpha(x_{1},...,x_{n_{1}}) \circ_{i} \beta(x_{1},...,x_{n_{2}}) = \alpha(x_{1},...,x_{i-1},\beta(x_{i},..,x_{i+n_{2}-1}),x_{i+n_{2}},...,x_{n_{1}+n_{2}-1})
	\end{equation*}
	If $i$ is not in the brace of $\alpha(x_{1},...,x_{n_{1}})$ then the brace of $\alpha(x_{1},...,x_{n_{1}})$ bounds a disc, that is, $\{x_{i_{1}},...,x_{i_{k_{1}}}\} = \partial c(x_{i_{1}},...,x_{i_{k_{1}}})$, similar to the example above.
	
	If $i$ is in the brace of $\alpha(x_{1},...,x_{n_{1}})$ then we can apply Lemma \ref{newleibnizeqs}. After iteratively applying \eqref{lemma4.5leibnizeq1} and \eqref{lemma4.5leibnizeq2}, although there can be braces inside of braces, each summand in the result must also have $\{x_{i_{1}},...,x_{i_{k_{2}}}\}$ outside of the brace. Indeed, since $\beta(x_{1},...,x_{n_{2}})$ is of type III, and thus has at least two braces. Then  $\{x_{i_{1}},...,x_{i_{k_{2}}}\} = \partial c(x_{i_{1}},...,x_{i_{k_{2}}})$. Since the brace $\{x_{i_{1}},...,x_{i_{k_{2}}}\}$, is the boundary of a disc, composition is trivial.
\end{proof}

\begin{theorem}\label{maintheorem}
	The composition of braces inside of other braces has a new relation:
	
	\begin{align}
	\{x_{1},...,x_{k_{1}-1},\{x_{k_{1}},...,x_{k_{1}+k_{2}-1}\}\} &= -(-1)^{(k_{1}-1)d} \sum_{i=1}^{k_{1}-1} (-1)^{(i-1)d}[x_{i},\{x_{1},...,\hat{x}_{i},...,x_{k_{1}+k_{2}-1}\}]\label{sum1}\\ &= (-1)^{(k_{1}-1)d} \sum_{i=k_{1}}^{k_{1}+k_{2}-1} (-1)^{(i-1)d} [x_{i},\{x_{1},...,\hat{x}_{i},...,x_{k_{1}+k_{2}-1}\}]\label{sum2}
	\end{align}
	
\end{theorem}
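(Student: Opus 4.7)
The two right-hand side expressions are a priori equal: the generalized Jacobi relation \eqref{thmjacobirelation}, applied to a brace of length $k_1+k_2-2$ in $H_\ast\mathcal{B}_d^{(k_1+k_2-2)}(k_1+k_2-1)$, reads $\sum_{i=1}^{k_1+k_2-1}(-1)^{(i-1)d}[x_i,\{x_1,\dots,\hat x_i,\dots,x_{k_1+k_2-1}\}]=0$, and splitting this sum at $i=k_1$ identifies \eqref{sum1} with \eqref{sum2} up to the common prefactor. It therefore suffices to establish one of them, which I plan to do for \eqref{sum1}.

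My plan is a chain-level geometric argument, in the spirit of the proof of Lemma \ref{newleibnizeqs}, carried out in $\mathcal{M}_d^{(k_1+k_2-2)}(k_1+k_2-1)$ via the homotopy equivalence $\mathcal{B}_d^{(k)}\simeq\mathcal{M}_d^{(k)}$. I would work on the big sphere $\Sigma=\{(x_1,\dots,x_{k_1+k_2-1})\mid \sum_i|x_i|^2=\varepsilon^2,\ \sum_i x_i=0\}$ and introduce the scalar $r^2:=\sum_{j=k_1}^{k_1+k_2-1}|x_j-\bar y|^2$ measuring the spread of the ``inner group'' about its center of mass $\bar y:=\tfrac{1}{k_2}\sum_{j=k_1}^{k_1+k_2-1} x_j$. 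The chain of interest is $C:=\{p\in\Sigma\mid r(p)\leq\delta\}$ for a fixed small $\delta\ll\varepsilon$; away from the forbidden strata, $C$ is a $(k_2-1)d$-dimensional disc bundle over the outer sphere $S^{(k_1-1)d-1}$ in the variables $(x_1,\dots,x_{k_1-1},\bar y)$ cut out by $|x_1|^2+\dots+|x_{k_1-1}|^2+k_2|\bar y|^2=\varepsilon^2-\delta^2$ and $\sum_{i=1}^{k_1-1}x_i+k_2\bar y=0$.

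Next, I identify the $(k_1+k_2-2)$-overlap strata inside $C$. Because $\delta$ is small, a $(k_1+k_2-2)$-way collision inside $C$ forces the entire inner group to collapse onto $\bar y$ (so $r=0$) while all but one of $x_1,\dots,x_{k_1-1}$ additionally coincide with $\bar y$. Indexed by the outlier $i\in\{1,\dots,k_1-1\}$, this yields exactly $k_1-1$ forbidden strata $F_i\subset C$, each diffeomorphic to $S^{d-1}$. After removing small tubular neighborhoods of the $F_i$, the chain $C$ lives entirely in $\mathcal{M}_d^{(k_1+k_2-2)}(k_1+k_2-1)$, and its boundary splits into two kinds of contributions: the cross-section at $r=\delta$, which by construction realizes the brace-in-brace cycle $\{x_1,\dots,x_{k_1-1},\{x_{k_1},\dots,x_{k_1+k_2-1}\}\}$; and, for each $i$, the link of $F_i$, which is an $S^{d-1}\times S^{(k_1+k_2-3)d-1}$-bundle realizing $[x_i,\{x_1,\dots,\hat x_i,\dots,x_{k_1+k_2-1}\}]$ (the $S^{d-1}$ factor from the base $F_i$, the brace sphere from the normal directions of the collapsed cluster). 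Setting this total boundary to zero in homology produces \eqref{sum1}.

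The principal obstacle is determining the precise signs in this boundary decomposition -- both the overall sign $-(-1)^{(k_1-1)d}$ and the alternating factors $(-1)^{(i-1)d}$ indexing the strata. Fixing them requires orienting $C$ coherently and tracking the induced orientations on the cross-section at $r=\delta$ and on the link of each $F_i$, then reconciling these with the sign conventions from the preceding subsection (leftmost brace first, each bracket ordered by the position of its comma). This sign bookkeeping is the technical heart of the argument and, as acknowledged in the introduction, is the reason the author consulted her advisor specifically on signs.
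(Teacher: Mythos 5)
Your geometric skeleton is essentially the paper's. The chain $C=\{r\le\delta\}$ is exactly one of the two pieces into which the codimension-one cycle $\{x_1,\dots,x_{k_1-1},\{x_{k_1},\dots,x_{k_1+k_2-1}\}\}$ splits the manifold $X_d^{(k_1+k_2-2)}(k_1+k_2-1)$ (the normalized sphere with tubular neighborhoods of all $k_1+k_2-1$ diagonal subspheres removed); your count of which forbidden strata lie inside $C$ (only the $k_1-1$ strata omitting an outer point, since omitting an inner point forces $r$ to be a fixed positive fraction of $\varepsilon$) and the identification of their links with $[x_i,\{x_1,\dots,\hat x_i,\dots,x_{k_1+k_2-1}\}]$ are the same as in the paper. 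Your opening remark that \eqref{sum1} and \eqref{sum2} differ by the generalized Jacobi relation \eqref{thmjacobirelation} is also exactly how the paper organizes the signs: since $\partial X$ is the Jacobi expression with its known coefficients $(-1)^{(i-1)d}$, the relative signs among the bracket terms are forced, and only one overall sign remains to be computed. (The paper additionally realizes $C$ concretely as a cylinder obtained by pulling $x_{k_1}$ off in a fixed direction, with the far end contributing the null-homologous cycle $x_{k_1}\cdot\{x_1,\dots,x_{k_1-1},\{1,x_{k_1+1},\dots\}\}$, but this is the same argument.)

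The genuine gap is that remaining sign, and your proposed route to it does not work as stated. You plan to fix the prefactor $-(-1)^{(k_1-1)d}$ by orienting $C$ and tracking induced orientations on the boundary pieces; but the orientation of the generator $\{x_1,\dots,x_k\}$ is never fixed directly in \cite{D_T} --- it is defined only implicitly by the requirement that the intersection pairing \eqref{psi} with the $k$-tree cocycles take the stated values (see the footnote in the signs section). So there is nothing to ``coherently orient'' against until that normalization is imported. The paper's resolution is to bypass orientation bookkeeping entirely: it pairs both sides of \eqref{sum2} with a single explicit $(k_1+k_2-2)$-tree cocycle (one square vertex with two round leaves $k_1-1$ and $k_1+k_2-1$), observes that this cocycle meets only the last summand $[x_{k_1+k_2-1},\{x_1,\dots,x_{k_1+k_2-2}\}]$ and the left-hand side, computes both intersection numbers via \eqref{psi} and the edge/vertex reordering rules, and matches them. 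Without this step (or an equivalent global normalization of all orientations), your argument establishes the relation only up to an undetermined overall sign, which is strictly weaker than the theorem as stated.
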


Note that the difference of \eqref{sum1} and \eqref{sum2} is exactly the generalized Jacobi relation \eqref{thmjacobirelation}. 

\begin{proof} We can see the above relation geometrically as follows:

\vspace{0.3cm}
\begin{center}
\begin{tikzpicture}
\filldraw[fill=gray!20, draw=black] (2,2) circle (2.5cm);
\draw (-0.5,2) arc (-180:0:2.5 and 0.7);
\draw [dashed] (-0.5,2) arc (180:0: 2.5 and 0.7);
\filldraw[fill=white, draw=black] (3,0.2) circle (0.3cm);
\filldraw[fill=white, draw=black] (1,0.4) circle (0.3cm);
\filldraw[fill=white, draw=black] (1.8,0.7) circle (0.3cm);
\filldraw[fill=white, draw=black] (0.8,3.6) circle (0.3cm);
\filldraw[fill=white, draw=black] (3.2,3.1) circle (0.3cm);
\filldraw[fill=white, draw=black] (1.8,3.5) circle (0.3cm);
\filldraw [fill=black] (3,0.2) circle (0.075cm);
\filldraw [fill=black] (1,0.4) circle (0.075cm);
\filldraw [fill=black] (1.8,0.7) circle (0.075cm);
\filldraw [fill=black] (0.8,3.6) circle (0.075cm);
\filldraw [fill=black] (3.2,3.1) circle (0.075cm);
\filldraw [fill=black] (1.8,3.5) circle (0.075cm);
\draw [<-] (4.5,2) -- (5.2,2) node[right] {$\{x_{1},...,x_{k_{1}-1},\{x_{k_{1}},...,x_{k_{1}+k_{2}-1}\}\}$};

\end{tikzpicture}	
\end{center}
\vspace{0.3cm}

Consider the intersection of the space $\mathcal{M}_{d}^{(k_{1}+k_{2}-2)} (k_{1}+k_{2}-1)$ with the sphere given by $\sum\limits_{i=1}^{k_{1}+k_{2}-1}x_{i}=0$ and $\sum\limits_{i=1}^{k_{1}+k_{2}-1} x_{i}^{2} = 1$. This space is homotopy equivalent to $\mathcal{M}_{d}^{(k_{1}+k_{2}-2)} (k_{1}+k_{2}-1)$, where translations and rescaling have been killed. The obtained space is $S^{(k_{1}+k_{2}-2)d-1}$ with several subspheres removed. Each removed subsphere is given by the intersection of $S^{(k_{1}+k_{2}-2)d-1}$ with the plane $x_{1}=...=\hat{x}_{i}= ... =x_{k_{1}+k_{2}-1}=0$ for $1\leq i \leq k_{1}+k_{2}-1$. All the removed subspheres are disjoint. We can take tubular neighborhoods around each subsphere, which are also all disjoint. Each tubular neighborhood has boundary which is exactly the cycle $[x_{i},\{x_{1},...,\hat{x}_{i},...,x_{k_{1}+k_{2}-1}\}]$. After removing these tubular neighborhoods we are left with a manifold with boundary that we denote $X_{d}^{(k_{1}+k_{2}-2)}(k_{1}+k_{2}-1)$. The boundary of $X_{d}^{(k_{1}+k_{2}-2)}(k_{1}+k_{2}-1)$ is exactly the generalized Jacobi relation \eqref{thmjacobirelation} for $k=k_{1}+k_{2}-2$. Then $\{x_{1},...,x_{k_{1}-1},\{x_{k_{1}},...,x_{k_{1}+k_{2}-1}\}\}$ is a submanifold of $X_{d}^{(k_{1}+k_{2}-2)}(k_{1}+k_{2}-1)$ that is of codimension 1. Thus $\{x_{1},...,x_{k_{1}-1},\{x_{k_{1}},...,x_{k_{1}+k_{2}-1}\}\}$ splits $X_{d}^{(k_{1}+k_{2}-2)}(k_{1}+k_{2}-1)$ into two parts where one part is given by the right hand side of \eqref{sum1} and the other is given by \eqref{sum2}.

The cycle $\{x_{1},...,x_{k_{1}-1},\{x_{k_{1}},...,x_{k_{1}+k_{2}-1}\}\}$ can be realized as a product of spheres: $S^{(k_{1}-1)d-1}\times S^{(k_{2}-1)d-1}$. We can describe the first sphere $S^{(k_{1}-1)d-1}$ by the following equations:

\begin{equation}\label{sphere1equs}
x_{1}+\cdots +x_{k_{1}-1}+Y=0 \qquad\qquad x_{1}^{2}+\cdots +x_{k_{1}-1}^{2}+Y^{2}=c^{2}\cdot k_{1}(k_{1}-1)
\end{equation}
where $Y=\frac{1}{k_{1}-1}(x_{k_{1}+1}+...+x_{k_{1}+k_{2}-1})$. 

Define $\overline{x}_{k_{1}} = x_{k_{1}} - Y$, $\overline{x}_{k_{1}+1} = x_{k_{1}+1} - Y$,..., $\overline{x}_{k_{1}+k_{2}-1} = x_{k_{1}+k_{2}-1} - Y$. Then we can describe the second sphere by the following equations: 

\begin{equation}\label{sphere2equs}
\overline{x}_{k_{1}}+\cdots +\overline{x}_{k_{1}+k_{2}-1} = 0 \qquad\qquad \overline{x}_{k_{1}}^{2}+\cdots +\overline{x}_{k_{1}+k_{2}-1}^{2} = \epsilon^{2} \cdot k_{2}(k_{2}-1), \qquad \epsilon^2 << c^2.
\end{equation}

Next we define a chain one dimension bigger by pulling only one point, $x_{k_{1}}$, in the direction of $(1,0,...,0)$. As we pull $x_{k_{1}}$ it can collide with $x_{k_{1}+1},...,x_{k_{1}+k_{2}-1}$ only when it intersects with $(k_{1}-1)$ forbidden strata each given by the following set of equations:

\begin{equation}\label{forbiddenstrata}
x_{1} = \cdots = \hat{x}_{i} = \cdots = x_{k_{1}+k_{2}-1}, \qquad 1\leq i \leq k_{1}-1.
\end{equation}

Note that $x_{j}=x_{i}$ if and only if $\overline{x}_{j} = \overline{x}_{i}$. The obtained chain is a cylinder $S^{(k_{1}-1)d-1}\times S^{(k_{2}-1)d-1}\times [0,N]$, where $N>>0$. This cylinder intersects the $(k_{1}-1)$-forbidden strata \eqref{forbiddenstrata} transversely and disjointly. 

\vspace{0.3cm}

\begin{tikzpicture}
[nodes={draw, thick, fill=black!0}]

\fill [gray!20] (2,-1.25) -- (2,-1.25) arc (270:90:-0.5  and 1.25) -- (11,1.25) arc (90:270:-0.5 and 1.25);
\draw (2,0) ellipse (0.5 and 1.25);
\draw (2,-1.25) -- (11,-1.25);
\draw (2,1.25) -- (11,1.25);
\draw (11,-1.25) arc (270:90:-0.5  and 1.25);
\draw [dashed] (11,-1.25) arc (270:90:0.5  and 1.25);
\filldraw[color=black!100, fill=black!0] (4,0.1) circle (0.3);
\filldraw[color=black!100, fill=black!0] (8.5,0.1) circle (0.3);
\filldraw[color=black!100, fill=black!0] (6.35,0.1) circle (0.3);

\draw [->] (2,-2) -- (2,-1.25);
\draw [->] (11,-2) -- (11,-1.25);
\draw [->] (6.35,-2.55) -- (6.35, -0.2);

\node (center) [draw=none, fill=none] { };
\node (bullet1) [draw=none, fill=none, right of=center, xshift=3cm, yshift=0.1cm] {$\bullet$};
\node (bullet3) [draw=none, fill=none, right of=center, xshift=5.35cm, yshift=0.1cm] {$\bullet$};
\node (bullet2) [draw=none, fill=none, right of=center, xshift=7.5cm, yshift=0.1cm] {$\bullet$};
\node (dots1) [draw=none, fill=none, right of=bullet1, xshift=0.2cm] {$\cdots$};
\node (dots2) [draw=none, fill=none, right of=bullet3, xshift=0.2cm] {$\cdots$};
\node (label1) [draw=none, fill=none, right of=center, xshift=1cm, yshift=-2.2cm] {$\{x_{1},...,x_{k_{1}-1}\{x_{k_{1}},..,x_{k_{1}+k_{2}-1}\}\}$};
\node (label2) [draw=none, fill=none, right of=center, xshift=3.8cm, yshift=-2.8cm] { };
\node (label3) [draw=none, fill=none, right of=center, xshift=7.2cm, yshift=-2.8cm] { };
\node (label4) [draw=none, fill=none, right of=center, xshift=10cm, yshift=-2.2cm] {$x_{k_{1}}\cdot \{x_{1},...,x_{k_{1}-1}\{1,x_{k_{1}+1},...x_{k_{1}+k_{2}-1}\}\}$};
\node (label15) [draw=none, fill=none, below of=bullet3, yshift=-1.8cm] {$[x_{i},\{x_{1},...,\hat{x}_{i},...,x_{k_{1}+k_{2}-1}\}]$};
\end{tikzpicture}

\vspace{0.3cm}

To get an actual chain in $\mathcal{M}_{d}^{(k_{1}+k_{2}-2)} (k_{1}+k_{2}-1)$, we remove disjoint tubular neighborhoods of each intersection with the forbidden strata. 

As an example, consider the intersection with the stratum $x_{2} = \cdots = x_{k_{1}+k_{2}-1}$. It happens when the initial position of $\overline{x}_{k_{1}} = (-(k_{1}-1)\epsilon,0,...,0)$ and the position of $\overline{x}_{j} = (\epsilon, 0,...,0)$ for $j= k_{1}+1,...,k_{1}+k_{2}-1$. Indeed, when $x_{k_{1}}$ is pulled in the direction of (1,0,...,0), the same happens with $\overline{x}_{k_{1}}$ and it hits all the other $\overline{x}_{j}$, $j=k_{1}+1,...,k_{1}+k_{2}-1$ only if $\overline{x}_{k_{1}+1} = \cdots = \overline{x}_{k_{1}+k_{2}-1}$ and are in the position $(\epsilon,0,..,0)$. Now since $Y=\frac{1}{k_{1}-1}(x_{k_{1}+1}+...+x_{k_{1}+k_{2}-1})$ and all $x_{j}$, $j=k_{1}+1,...,k_{1}+k_{2}-1$ are equal, we get $Y = x_{j}$, $j=k_{1}+1,...,k_{1}+k_{2}-1$. We also need $Y$ to coincide with $x_{2},...,x_{k_{1}-1}$, this reduces \eqref{sphere1equs} to the following equations:

\begin{align}
x_{1}+ (k_{1}-1)Y &=0\label{collisionequ1}\\
x_{1}^{2}+(k_{1}-1)Y^{2} &=c^{2}k_{1}(k_{1}-1).\label{collisionequ2}
\end{align}

The first equation \eqref{collisionequ1} kills translations and the second equation \eqref{collisionequ2} kills rescaling. Thus we get a sphere $S^{d-1}$, which corresponds to $[x_{1}, Y]$. However since $Y$ has collided with $x_{2}, ..., x_{k_{1}+k_{2}-1}$, we remove from the attained chain a tubular neighborhood of its intersection with the forbidden stratum $x_{2}=\cdots = x_{k_{1}+k_{2}-1}$. This gives us exactly the cycle $[x_{1}, \{x_{2},...,x_{k_{1}+k_{2}-1}\}]$.

Therefore, in general, if $Y$ has collided with $x_{1},...,\hat{x}_{i},..,x_{k_{1}+k_{2}-1}$, for $i=i,...,k_{1-1}$, then we get similarly

\begin{align*}
x_{i}+ (k_{1}-1)Y &=0\\
x_{i}^{2}+(k_{1}-1)Y^{2} &=c^{2}k_{1}(k_{1}-1).
\end{align*}

This gives a sphere $S^{d-1}$ corresponding to $[x_{i},Y]$. We remove from the cylinder $S^{(k_{1}-1)d-1}\times S^{(k_{2}-1)d-1}\times [0,N]$ a tubular neighborhood of its intersection with the forbidden strata $x_{1}=\cdots =\hat{x}_{i}=\cdots =x_{k_{1}+k_{2}-1}$, which yields the boundary cycle $[x_{i}, \{x_{1},...,\hat{x}_{i},..,x_{k_{1}+k_{2}-1}\}]$. 

At the right end of the cylinder, $S^{(k_{1}-1)d-1}\times S^{(k_{2}-1)d-1}\times [0,N]$, we get a cycle $x_{k_{1}}\cdot \{x_{1},...,x_{k_{1}-1}\{1,x_{k_{1}+1},...x_{k_{1}+k_{2}-1}\}\}$ which is homologously trivial since $\{1,x_{1},...x_{k_{2}-1}\} = 0 \in H_{\geq0}\mathcal{M}_{d}^{(k_{2})}(k_{2}-1)=0$. All together, this gives us the relation \eqref{sum1}. Note that \eqref{sum2} minus \eqref{sum1} is the generalized Jacobi \eqref{thmjacobirelation} and therefore \eqref{sum2} is a consequence of \eqref{sum1}. 

We explain how the sign in front of the sum \eqref{sum2} is found in the next section. It is enough to understand the sign in front of only one of the summands in \eqref{sum2} (We do it for the very last one). This is due to the argument made at the beginning of the proof.

\end{proof}

\section{Signs in Theorem \ref{maintheorem}}

In \cite[Section 6]{D_T}, the authors describe the cohomology groups $H^{\ast}\mathcal{M}_{d}^{(k)}(n)$ as spaces of certain admissible $k$-forests, where the $k$-forests have two types of vertices square and round. If a $k$-forest has only one component, we call it a $k$-tree. Every square vertex contains a $(k-1)$-elements subset of $\{1,...,n\}$ and every round vertex contains just one element. Each round vertex must be connected by an edge to a single square vertex, or completely disconnected from all other vertices. Square vertices must be connected to at least one round vertex. One orients all the given edges between the vertices. Each $k$-forest has an orientation set, which consists of all the edges and square vertices. The order of the orientation set encodes the coorientations of the corresponding chains. The degree of a square vertex is $(k-2)d$ and the degree of an edge is $d-1$. The cocycles in $H^{\ast}\mathcal{M}_{d}^{(k)}(n)$ corresponding to the $k$-forests are geometrically realized as an intersection number with cooriented chains in $\mathcal{M}_{d}^{(k)}(n)$, which are defined by a set of (in)equalities as follows. If $i$ and $j$ are in the same square vertex, then $x_{i} = x_{j}$. The authors give a projection $p_{1}:\mathbb{R}^{d}\rightarrow \mathbb{R}^{d-1}$ where $(x^{1},...,x^{d})\mapsto (x^{2},...,x^{d})$. For two vertices $A$ and $B$ in a forest that are connected by an edge oriented from $A$ to $B$, they require that $B$ is ``above" $A$. Explicitly, for all $i\in A$ and all $j\in B$, $x_{i}^{1}\leq x_{j}^{1}$ and $p_{1}(x_{i}) = p_{1}(x_{j})$.

In \cite[Section 8]{D_T}, the authors define a map $\Psi$, which describes the intersection between cycles given as products of iterated brackets (and geometrically realized as products of spheres) with the $k$-forests cocycles.

As an example,
\begin{equation}\label{psi}
\Psi(\{x_{1},...,x_{k}\}) = \sum\limits_{\ell = 1}^{k} (-1)^{(\ell-1)d} \begin{tikzpicture} [baseline=-0.7cm]
\draw node [left, yshift=0.2cm] {1} (0,0) rectangle node {$1,...,\hat{\ell},...,k$} (2.1,0.5); 
\draw (1.05,-1.5) circle (0.3) node {$\ell$};
\draw [->] (1.05,0) -- (1.05,-1.2) node [left, yshift=0.5cm] {2};
\end{tikzpicture}
\end{equation}
The formula \eqref{psi} means that the intersection of the cycle $\{x_{1},...,x_{k}\}$ and the cocycle \begin{tikzpicture} [baseline=-0.4cm]
\draw node [left, yshift=0.2cm] {1} (0,0) rectangle node {$1,...,\hat{\ell},...,k$} (2.1,0.5); 
\draw (1.05,-1) circle (0.3) node {$\ell$};
\draw [->] (1.05,0) -- (1.05,-0.7) node [left, yshift=0.3cm] {2};
\end{tikzpicture} is $(-1)^{(\ell-1)d}$, i.e. $\{x_{1},...,x_{k}\} \; \bigcap$ \begin{tikzpicture} [baseline=-0.4cm]
\draw node [left, yshift=0.2cm] {1} (0,0) rectangle node {$1,...,\hat{\ell},...,k$} (2.2,0.5); 
\draw (1.05,-1) circle (0.3) node {$\ell$};
\draw [->] (1.05,0) -- (1.05,-0.7) node [left, yshift=0.3cm] {2};
\end{tikzpicture} = $(-1)^{(\ell-1)d}$. For this, one needs that the sphere $\{x_{1},...,x_{k}\}$ given by the equations \eqref{sphererepresentation} should be oriented as follows. One projects this sphere to $(x_{1},...,x_{k-1})$. We get an ellipsoid whose orientation is such that the outside normal vector taken as a first one, union the oriented tangent frame gives $(-1)^{kd}$ times the standard orientation of $\mathbb{R}^{(k-1)d}$.\footnote{We do not actually need it, but it is worth mentioning as in the original paper \cite{D_T}, the orientation of $\{x_{1},...,x_{k}\}$ has not been determined. It was just said that the orientation of $\{x_{1},...,x_{k}\}$ is such that the pairing \eqref{psi} works, see \cite[footnote 3]{D_T}.}

Now to determine the sign in front of \eqref{sum2} in Theorem \ref{maintheorem}, we consider the intersection of the cocycle \begin{tikzpicture}[baseline=-0.4cm]
\draw node [left, yshift=0.2cm] {1} (0,0) rectangle node {$1,...,\widehat{k_{1}-1},...,k_{1}+k_{2}-2$} (4.5,0.7); 
\draw (0.8,-1.2) ellipse (0.6 and 0.3) node {$k_{1}-1$};
\draw (3.2,-1.2) ellipse (1 and 0.3) node {$k_{1}+k_{2}-1$};
\draw [->] (0.8,0) -- (0.8,-0.9) node [left, yshift=0.4cm] {2};
\draw [->] (3.2,0) -- (3.2,-0.9) node [left, yshift=0.4cm] {3};
\end{tikzpicture} with $\{x_{1},...,x_{k_{1}-1},\{x_{k_{1}},...,x_{k_{1}+k_{2}-1}\}\}$ and with the right hand side of \eqref{sum2}. The corresponding cochain intersects only the last summand of \eqref{sum2}, $[x_{k_{1}+k_{2}-1},\{x_{1},...,x_{k_{1}+k_{2}-2}\}]$. The latter intersection is obtained by computing 

\begin{multline*}
\Psi([x_{k_{1}+k_{2}-1},\{x_{1},...,x_{k_{1}+k_{2}-2}\}]) =\\ \begin{tikzpicture}[baseline=-0.4cm]
\draw (0,0) rectangle node {$1,...,\widehat{k_{1}-1},...,k_{1}+k_{2}-2$} (4.5,0.7)node (2) [right, yshift=-0.2cm] {2}; 
\draw (3.2,-1.2) ellipse (0.6 and 0.3) node  {$k_{1}-1$};
\draw (-2,0.3) ellipse (1 and 0.3) node (round) {$k_{1}+k_{2}-1$};
\draw [->] (-1,0.3) -- (0,0.3) node [left, yshift=-0.3cm, xshift=-0.4cm] {1};
\draw [->] (3.2,0) -- (3.2,-0.9) node [left, yshift=0.4cm] {3};
\node [right of=2, xshift=-0.45cm, yshift=-0.2cm] {$+\cdots$};
\node [left of= round, xshift=-1.5cm] {$(-1)^{(k_{1}-2)d}$};
\end{tikzpicture}
\end{multline*}
See Section 8 in \cite{D_T}, in particular Example 8.1(a). The other summands are $(k_{1}+k_{2}-2)$-trees of different shapes (and thus do not contribute). The sign in front comes from \eqref{psi}. To get the desired intersection, first we reverse the arrow between the square vertex and the round vertex labeled $k_{1}+k_{2}-1$. This gives the sign $(-1)^{d}$. After reversing the arrow, we have the following $(k_{1}+k_{2}-2)$-tree:
\begin{equation*}
\begin{tikzpicture}[baseline=-0.4cm]
\draw node (2) [left, yshift=0.2cm] {2} (0,0) rectangle node {$1,...,\widehat{k_{1}-1},...,k_{1}+k_{2}-2$} (4.5,0.7); 
\draw (3.2,-1.2) ellipse (0.6 and 0.3) node {$k_{1}-1$};
\draw (1,-1.2) ellipse (1 and 0.3) node {$k_{1}+k_{2}-1$};
\draw [->] (1,0) -- (1,-0.9) node [left, yshift=0.4cm] {1};
\draw [->] (3.2,0) -- (3.2,-0.9) node [left, yshift=0.4cm] {3};
\node [draw=none, fill=none, left of=2, xshift=-0.5cm] {$(-1)^{(k_{1}-1)d}$};
\end{tikzpicture}.
\end{equation*}

Next, we change the order of the elements 1, 2 and 3 in the orientation set by pulling 2 in front and pushing 1 to the end. The degree of 1 and 3 is $d-1$. The degree of 2 is $(k_{1}+k_{2}-4)d$. So the obtained sign from reordering the orientation set is $(-1)^{(d-1)(k_{1}+k_{2}-4)d}\times(-1)^{(d-1)(d-1)
} = (-1)^{d-1}$. After this change, we now have the $(k_{1}+k_{2}-2)$-tree that we want: 
\begin{equation*}
\begin{tikzpicture}[baseline=-0.4cm]
\draw node (1) [left, yshift=0.2cm] {1} (0,0) rectangle node {$1,...,\widehat{k_{1}-1},...,k_{1}+k_{2}-2$} (4.5,0.7); 
\draw (0.8,-1.2) ellipse (0.6 and 0.3) node {$k_{1}-1$};
\draw (3.2,-1.2) ellipse (1 and 0.3) node {$k_{1}+k_{2}-1$};
\draw [->] (0.8,0) -- (0.8,-0.9) node [left, yshift=0.4cm] {2};
\draw [->] (3.2,0) -- (3.2,-0.9) node [left, yshift=0.4cm] {3};
\node [draw=none, fill=none, left of=1, xshift=-0.2cm] {$(-1)^{k_{1}d-1}$};
\end{tikzpicture}.
\end{equation*}

\noindent Finally, we computed the intersection 
\begin{equation}\label{rightintersection}
[x_{k_{1}+k_{2}-1}, \{x_{1},...,x_{k_{1}+k_{2}-2}\}] \; \bigcap \begin{tikzpicture}[baseline=-0.4cm]
\draw node (1) [left, yshift=0.2cm] {1} (0,0) rectangle node {$1,...,\widehat{k_{1}-1},...,k_{1}+k_{2}-2$} (4.5,0.7); 
\draw (0.8,-1.2) ellipse (0.6 and 0.3) node {$k_{1}-1$};
\draw (3.2,-1.2) ellipse (1 and 0.3) node {$k_{1}+k_{2}-1$};
\draw [->] (0.8,0) -- (0.8,-0.9) node [left, yshift=0.4cm] {2};
\draw [->] (3.2,0) -- (3.2,-0.9) node [left, yshift=0.4cm] {3};
\end{tikzpicture} =(-1)^{k_{1}d-1}.
\end{equation}

The sign in front of the last summand of \eqref{sum2} is $(-1)^{(k_{1}-1)d}\times (-1)^{(k_{1}+k_{2}-2)d} = (-1)^{(k_{2}-1)d}$. In conclusion, we obtained that the intersection with the right hand side of \eqref{sum2} is
\begin{equation} \label{finalsign}
(-1)^{k_{1}d-1}\times(-1)^{(k_{2}-1)d}=(-1)^{(k_{1}+k_{2}-1)d-1}.
\end{equation}
Next we want to check that this is the same sign that we get on the left hand side for $\{x_{1},...,x_{k_{1}-1},\{x_{k_{1}},...,x_{k_{1}+k_{2}-1}\}\}$. This cycle is the product of two spheres $S^{(k_{1}-1)d}\times S^{(k_{2}-1)d-1}$.

The chain \begin{tikzpicture}[baseline=-0.4cm]
\draw node (1) [left, yshift=0.2cm] {1} (0,0) rectangle node {$1,...,\widehat{k_{1}-1},...,k_{1}+k_{2}-2$} (4.5,0.7); 
\draw (0.8,-1.2) ellipse (0.6 and 0.3) node {$k_{1}-1$};
\draw (3.2,-1.2) ellipse (1 and 0.3) node {$k_{1}+k_{2}-1$};
\draw [->] (0.8,0) -- (0.8,-0.9) node [left, yshift=0.4cm] {2};
\draw [->] (3.2,0) -- (3.2,-0.9) node [left, yshift=0.4cm] {3};
\end{tikzpicture} is the transverse intersection of the following two chains: \begin{tikzpicture} [baseline=-0.4cm]
\draw node [left, yshift=0.2cm] {1} (0,0) rectangle node {$1,...,k_{1}-2,k_{1}$} (2.3,0.5); 
\draw (1.05,-1) ellipse (0.6 and 0.3) node {$k_{1}-1$};
\draw [->] (1.05,0) -- (1.05,-0.7) node [left, yshift=0.5cm] {2};
\end{tikzpicture} and \begin{tikzpicture} [baseline=-0.4cm]
\draw node [left, yshift=0.2cm] {1} (0,0) rectangle node {$k_{1},...,k_{1}+k_{2}-2$} (2.7,0.5); 
\draw (1.05,-1) ellipse (1 and 0.3) node {$k_{1}+k_{2}-1$};
\draw [->] (1.05,0) -- (1.05,-0.7) node [left, yshift=0.5cm] {2};
\end{tikzpicture}. The coorientation of \begin{tikzpicture}[baseline=-0.4cm]
\draw node (1) [left, yshift=0.2cm] {1} (0,0) rectangle node {$1,...,\widehat{k_{1}-1},...,k_{1}+k_{2}-2$} (4.5,0.7); 
\draw (0.8,-1.2) ellipse (0.6 and 0.3) node {$k_{1}-1$};
\draw (3.2,-1.2) ellipse (1 and 0.3) node {$k_{1}+k_{2}-1$};
\draw [->] (0.8,0) -- (0.8,-0.9) node [left, yshift=0.4cm] {2};
\draw [->] (3.2,0) -- (3.2,-0.9) node [left, yshift=0.4cm] {3};
\end{tikzpicture} is equivalent to the coorientation obtained by concatenating the coorientations of the chains corresponding to the $k_{1}$-tree \begin{tikzpicture} [baseline=-0.4cm]
\draw node [left, yshift=0.2cm] {1} (0,0) rectangle node {$1,...,k_{1}-2,k_{1}$} (2.3,0.5); 
\draw (1.05,-1) ellipse (0.6 and 0.3) node {$k_{1}-1$};
\draw [->] (1.05,0) -- (1.05,-0.7) node [left, yshift=0.5cm] {2};
\end{tikzpicture} and the $k_{2}$-tree \begin{tikzpicture} [baseline=-0.4cm]
\draw node [left, yshift=0.2cm] {1} (0,0) rectangle node {$k_{1},...,k_{1}+k_{2}-2$} (2.7,0.5); 
\draw (1.05,-1) ellipse (1 and 0.3) node {$k_{1}+k_{2}-1$};
\draw [->] (1.05,0) -- (1.05,-0.7) node [left, yshift=0.5cm] {2};
\end{tikzpicture}. Indeed, the difference in sign is obtained by pulling the square vertex \begin{tikzpicture}[baseline=0.1cm]
\draw (0,0) rectangle node {$k_{1},...,k_{1}+k_{2}-2$} (2.7,0.5);
\end{tikzpicture} through the edge of \begin{tikzpicture} [baseline=-0.4cm]
\draw node [left, yshift=0.2cm] {1} (0,0) rectangle node {$1,...,k_{1}-2,k_{1}$} (2.3,0.5); 
\draw (1.05,-1) ellipse (0.6 and 0.3) node {$k_{1}-1$};
\draw [->] (1.05,0) -- (1.05,-0.7) node [left, yshift=0.5cm] {2};
\end{tikzpicture}. This pulling does not affect the sign since the degree of \begin{tikzpicture}[baseline=0.1cm]
\draw (0,0) rectangle node {$k_{1},...,k_{1}+k_{2}-2$} (2.7,0.5);
\end{tikzpicture} is a multiple of $d$ and the degree of the edge is $d-1$. By \eqref{psi}, the intersection of $S^{(k_{1}-1)d-1}$ (the first factor of $\{x_{1},...,x_{k_{1}-1},\{x_{k_{1}},...,x_{k_{1}+k_{2}-1}\}\}$) with \begin{tikzpicture} [baseline=-0.4cm]
\draw node [left, yshift=0.2cm] {1} (0,0) rectangle node {$1,...,k_{1}-2,k_{1}$} (2.3,0.5); 
\draw (1.05,-1) ellipse (0.6 and 0.3) node {$k_{1}-1$};
\draw [->] (1.05,0) -- (1.05,-0.7) node [left, yshift=0.5cm] {2};
\end{tikzpicture} is $(-1)^{(k_{1}-1-1)d-1} = (-1)^{k_{1}d-1}$. Similarly by \eqref{psi}, the intersection of $S^{(k_{2}-1)d-1}$ (the second factor of $\{x_{1},...,x_{k_{1}-1},\{x_{k_{1}},...,x_{k_{1}+k_{2}-1}\}\}$) and \begin{tikzpicture} [baseline=-0.4cm]
\draw node [left, yshift=0.2cm] {1} (0,0) rectangle node {$k_{1},...,k_{1}+k_{2}-2$} (2.7,0.5); 
\draw (1.05,-1) ellipse (1 and 0.3) node {$k_{1}+k_{2}-1$};
\draw [->] (1.05,0) -- (1.05,-0.7) node [left, yshift=0.5cm] {2};
\end{tikzpicture} is $(-1)^{k_{2}d-d-1}$. So the total sign is $(-1)^{(k_{1}+k_{2}-1)d-1}$, which is exactly the same as \eqref{finalsign}. Therefore, the sign $(-1)^{(k_{1}-1)d}$ in front of \eqref{sum2} is correct. 

\nocite{*}
\bibliography{filteredoperads} 
\bibliographystyle{plain}

\end{document}